\numberwithin{equation}{section}
\theoremstyle{plain} 
\newtheorem{theorem}[equation]{Theorem}
\newtheorem{lemma}[equation]{Lemma}
\newtheorem{proposition}[equation]{Proposition}
\newtheorem{corollary}[equation]{Corollary}
\newtheorem*{maintheorem}{Theorem~\ref{thm: main theorem}}
\theoremstyle{definition}
\newtheorem{definition}[equation]{Definition}
\newtheorem*{remark*}{Remark}
\newcommand{\maintheoremtext}{
Suppose that $H$ is a $p$-toral subgroup of $U(n)$
and $\weakfixed{H}$ is not contractible. Then $H$ is a projective
elementary abelian $p$-subgroup of $U(n)$.
}
\newcommand{\reals}{{\mathbb{R}}}
\newcommand{\integers}{{\mathbb{Z}}}
\newcommand{\complexes}{{\mathbb{C}}}
\newcommand{\lambdaglom}[1]{ (\lambda/{ #1 })}
\newcommand{\n}{\mathbf{n}} 
\def\doCal#1{%
\ifx#1\doAllCalEnd\def\doAllCal{\relax}\else%
 \expandafter\edef\csname#1cal\endcsname{{\noexpand\mathcal #1}}\fi}
\def\doAllCal#1{\doCal#1\doAllCal}
\def\doBar#1{%
\ifx#1\doAllBarEnd\def\doAllBar{\relax}\else%
 \expandafter\edef\csname#1bar\endcsname{{\noexpand\overline{#1}}}\fi}
\def\doAllBar#1{\doBar#1\doAllBar}
\def\doWiggle#1{%
\ifx#1\doAllWiggleEnd\def\doAllWiggle{\relax}\else%
 \expandafter\edef\csname#1wiggle\endcsname{{\noexpand\tilde{#1}}}\fi}
\def\doAllWiggle#1{\doWiggle#1\doAllWiggle}
\DeclareMathOperator{\class}{cl}
\DeclareMathOperator{\GL}{GL}
\DeclareMathOperator{\Gr}{Gr}
\DeclareMathOperator{\Id}{Id}
\DeclareMathOperator{\Morph}{Morph}
\DeclareMathOperator{\Nerve}{Nerve}
\DeclareMathOperator{\Obj}{Obj}
\DeclareMathOperator{\rank}{rank}
\DeclareMathOperator{\st}{st}
\newcommand{\Iso}[1]{\left(\Lcal_{n}\right)^{#1}_{\operatorname{iso}}}
\newcommand{\Isospecific}[2]{\left(\Lcal_{#2}\right)^{#1}_{\operatorname{iso}}}
\newcommand{\orderp}[1]{#1[p]}
\newcommand{\weakfixed}[1]{ \left(\Lcal_{n}\right)^{#1} }
\newcommand{\weakfixedspecific}[2]{ \left(\Lcal_{#2}\right)^{#1} }
\newcommand{\strongfixed}[1]{ \left(\Lcal_{n}\right)^{#1}_{\st}}
\newcommand{\strongfixedspecific}[2]{ \left(\Lcal_{#2}\right)^{#1}_{\st}}
\newcommand{\weakPfixed}[1]{ \left(\Pcal_{n}\right)^{#1} }
\begin{document}

\title[$p$-toral fixed point sets on $\Lcal_{n}$]
{Fixed points of $p$-toral groups acting on partition complexes}

\author{Julia E.\ Bergner}
\address{Department of Mathematics, University of
  California, Riverside}
\email{bergnerj@member.ams.org}
\thanks{The first, third, and fourth authors received partial
  support from NSF grants DMS-1105766, DMS-0968251, and DMS-1307390, respectively.  The second author was partially supported by DFG grant HO 4729/1-1, and the fifth author was partially supported by an AIM $5$-year fellowship.}
\author{Ruth Joachimi}
\address{Department of Mathematics and Informatics, University of Wuppertal, Germany}
\email{joachimi@math.uni-wuppertal.de}
\author{Kathryn Lesh}
\address{Department of Mathematics, Union College, Schenectady, NY 12309 USA}
\email{leshk@union.edu}
\author{Vesna Stojanoska}
\address{Department of Mathematics, Massachusetts Institute of Technology, Cambridge MA 02143}
\email{vstojanoska@math.mit.edu}
\author{Kirsten Wickelgren}
\address{Georgia Institute of Technology, School of Mathematics, 686 Cherry Street, Atlanta GA 30332}
\email{wickelgren@post.harvard.edu}

\subjclass[2010]{Primary 55N91, Secondary 55P65, 55R45}

\maketitle
\markboth{\sc{Bergner, Joachimi, Lesh, Stojanoska, and Wickelgren}}
{\sc{$p$-toral fixed point sets in $\Lcal_{n}$}}

\begin{abstract}
  We consider the action of $p$-toral subgroups of $U(n)$ on the
  unitary partition complex $\Lcal_{n}$. We show that if $H\subseteq
  U(n)$ is $p$-toral and has noncontractible fixed points on
  $\Lcal_{n}$, then the image of $H$ in the projective unitary group
  $U(n)/S^{1}$ is an elementary abelian $p$-group.
\end{abstract}

\maketitle

\section{Introduction}
Let $\n$ denote the set $\{1,...,n\}$ and let $\Pcal_{n}$ denote the
nerve of the poset of proper, nontrivial partitions of $\n$, ordered
by coarsening. In~\cite{ADL2}, Arone, Dwyer, and Lesh compute the
Bredon homology of $\Pcal_{n}$ for certain kinds of $p$-local Mackey
functors on the category of $\Sigma_{n}$-sets.  The calculation is
part of a program to obtain a proof of the Whitehead Conjecture and
the collapse of the homotopy spectral sequence of the Goodwillie tower
of the identity functor for $S^{1}$ by using the Bousfield-Kan
cosimplicial resolution of $S^{1}$.  A key element in the calculation
of~\cite{ADL2} is understanding which $p$-subgroups $H \subseteq
\Sigma_{n}$ can have noncontractible fixed point sets
$\weakPfixed{H}$. It turns out that if $H \subseteq \Sigma_{n}$ is a
$p$-group and $\weakPfixed{H}$ is not contractible, then $H$ is
elementary abelian (Proposition~6.6 in~\cite{ADL2}).

In this paper, we consider the corresponding question in the unitary
context, following analogies set up by Arone and Lesh
in~\cite{Arone-Topology} and~\cite{Arone-Lesh}.  Let $\Lcal_{n}$
denote the nerve of the (topological) poset of proper partitions of
$\complexes^{n}$ into orthogonal subspaces, where ``proper'' means
that we exclude the partition consisting of the single subspace
$\complexes^{n}$ itself. (See Section~\ref{section: Ln small}.)  The
action of the unitary group $U(n)$ on $\complexes^n$ induces an action
on $\Lcal_n$.  The space $\Lcal_{n}$ with its $U(n)$ action is
strongly related to the $bu$-analogues of symmetric powers of spheres
and to the Weiss tower for the functor $V\mapsto BU(V)$ (see
\cite{Arone-Lesh}~Theorem~9.5 and \cite{Arone-Topology} Theorems~2
and~3).

In moving from finite group theory to compact Lie groups, one replaces the
notion of a $p$-group with that of a $p$-toral group, i.e., an
extension of a finite $p$-group by a torus.  In this paper we study
the action of $p$-toral subgroups $H\subseteq U(n)$ on $\Lcal_n$ in
order to find out when the fixed point set $\weakfixed{H}$ is
contractible.  We show that the answer is ``most of the time": the
condition that $\weakfixed{H}$ is not contractible puts considerable
group-theoretic restrictions on the $p$-toral subgroup $H$.

Recall that the center of $U(n)$ is $S^1$, and that the projective unitary
group is defined as $PU(n)=U(n)/S^{1}$. A subgroup $H$ of $U(n)$ is called
\emph{projective elementary abelian} if its image in $PU(n)$ is
elementary abelian. Our main theorem is the following.
 
\begin{theorem}     \label{thm: main theorem}
\maintheoremtext
\end{theorem}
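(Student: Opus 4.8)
The plan is to prove the contrapositive: assuming $H$ is a $p$-toral subgroup of $U(n)$ whose image $\Hbar$ in $PU(n)$ is \emph{not} elementary abelian, I would show that the fixed point space $\weakfixed{H}$ is contractible. Throughout, a point of $\weakfixed{H}$ is a proper orthogonal decomposition $\complexes^{n}=V_{1}\perp\cdots\perp V_{k}$ whose set of summands is permuted by $H$, and these form a topological subposet of the partition poset under refinement. The engine of the argument is a contraction principle: if one can produce a \emph{canonical} proper $H$-invariant orthogonal decomposition $\Dcal$ of $\complexes^{n}$, then joining an arbitrary $H$-fixed partition $\sigma$ with $\Dcal$ (their finest common orthogonal coarsening, which is again $H$-invariant) gives natural comparisons $\sigma\le\sigma\vee\Dcal\ge\Dcal$, hence a zig-zag of poset maps deformation-retracting $\weakfixed{H}$ onto the up-set of $\Dcal$, which has a least element and is therefore conically contractible. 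The price is properness: one must arrange that $\sigma\vee\Dcal$ never collapses to the whole space $\complexes^{n}$, and this is what forces the case analysis below.

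The first reduction handles the positive-dimensional part. If the identity component $H_{0}$ is not contained in the center $S^{1}$, then the decomposition of $\complexes^{n}$ into weight spaces for the torus $H_{0}$ is canonical, $H$-invariant (since $H$ normalizes $H_{0}$), and proper (because $H_{0}$ acts through at least two distinct characters). Applying the contraction principle disposes of this case, so from now on $H_{0}\subseteq S^{1}$ and $\Hbar$ is a finite $p$-group. Likewise, if $\complexes^{n}$ is not $H$-isotypic, its decomposition into isotypic components is canonical, proper, and \emph{pointwise} fixed by $H$, so again $\weakfixed{H}$ is contractible; this reduces us to the case in which $\complexes^{n}$ is a multiple of a single irreducible $H$-representation.

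The heart of the matter is the isotypic case, and here the failure of $\Hbar$ to be elementary abelian enters through the Frattini subgroup: a finite $p$-group is elementary abelian exactly when its Frattini subgroup is trivial, so $\Phi(\Hbar)\ne1$. Let $N\subseteq H$ be the preimage of $\Phi(\Hbar)$, a normal subgroup canonically attached to $H\subseteq U(n)$ with $S^{1}\subsetneq N$; since $N/S^{1}=\Phi(\Hbar)\ne1$, the subgroup $N$ does not act on $\complexes^{n}$ by scalars. Applying Clifford theory to $N$, I would split into two cases. If the $N$-isotypic decomposition of $\complexes^{n}$ is nontrivial, then it is a canonical proper orthogonal decomposition permuted by $H$, and the contraction principle finishes the argument. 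Otherwise $\complexes^{n}$ is $N$-homogeneous, $\complexes^{n}\cong W\otimes\complexes^{m}$ with $W$ an irreducible $N$-representation, and $H$ acts projectively on the multiplicity space $\complexes^{m}$ through the strictly smaller quotient $H/N$; here I would try to reduce the contractibility of $\weakfixed{H}$ to the analogous statement for the $(H/N)$-action on $\complexes^{m}$ and close by induction on $|\Hbar|$.

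I expect the main obstacle to be precisely this homogeneous branch. The difficulty is that an orthogonal decomposition of a tensor product $W\otimes\complexes^{m}$ need not be of the product form $W\otimes(\text{decomposition of }\complexes^{m})$, so the fixed-point poset for $(H,\complexes^{n})$ is strictly larger than, and not obviously controlled by, the fixed-point poset for $(H/N,\complexes^{m})$; genuinely new input is needed to handle these ``twisted'' decompositions and to ensure the induction does not silently change the elementary abelian status. A secondary but pervasive difficulty is the properness bookkeeping in the contraction principle: unlike set partitions, orthogonal decompositions admit no well-behaved meet, so one is forced to use joins while ruling out collapse to the improper partition $\complexes^{n}$, and the elementary abelian (extraspecial) configurations are exactly the rigid ones in which such collapse is unavoidable and $\weakfixed{H}$ is genuinely noncontractible.
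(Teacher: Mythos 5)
Your overall shape (contrapositive, canonical invariant decompositions, poset contractions) matches the paper's, but there are two genuine gaps, and the second is not the one you flagged as hardest. The first gap is that your ``contraction principle'' via joins fails on properness in exactly the cases you claim it disposes of. The join $\sigma\vee\Dcal$ of two proper orthogonal decompositions can be improper even when $\Dcal$ is canonical and pointwise $H$-fixed: take $H=H_{0}=S^{1}\subseteq U(4)$ acting by one character on $\langle e_{1},e_{2}\rangle$ and another on $\langle e_{3},e_{4}\rangle$, and $\sigma=\{\langle e_{1},e_{3}\rangle,\langle e_{2},e_{4}\rangle\}$; then $\sigma\vee\Dcal=\complexes^{4}$. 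The paper never forms such a join. Instead (Theorem~\ref{thm:ContractJ}) it first coarsens $\lambda$ to $\lambdaglom{J}$ by merging blocks in the same $J$-orbit (Definition~\ref{orbit_partition_def}), whose properness is exactly non-transitivity of $J$ on $\class(\lambda)$; it then \emph{refines} each block of the resulting strongly $J$-fixed partition into its $J$-isotypic components (refinements are automatically proper), and only then coarsens to the terminal object of $\Iso{J}$. All properness bookkeeping is thereby reduced to one statement, Lemma~\ref{lemma:B2part:nontransitive}, and its proof is where ``not elementary abelian'' actually bites: if $V\cong\integers/p$ lies in $\ker(\Hbar\to\Hbar/p)$ and acted transitively on $\class(\lambda)$, then $|\class(\lambda)|=p$, the image of $\Hbar$ in $\Sigma_{p}$ would be the transitive $\integers/p$ (a maximal $p$-subgroup of $\Sigma_{p}$), so $\Hbar\to\Sigma_{p}$ would factor through $\Hbar/p$ and kill $V$. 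Your proposal never isolates this Frattini-kernel condition, and without it nothing distinguishes your argument from the situation of Proposition~\ref{prop: non-contractible}, where $\Hbar=\integers/2\subseteq PU(2)$ is polytypic with a proper isotypic decomposition and yet $\weakfixed{H}$ is noncontractible, precisely because there are weakly fixed partitions whose blocks are permuted transitively.

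The second gap is the homogeneous Clifford branch, which you correctly identify as unresolved; the point is that it never arises in the paper because the paper does not use the full preimage of $\Phi(\Hbar)$. It chooses a single $V\cong\integers/p$ inside $Z(\Hbar)\cap\ker(\Hbar\to\Hbar/p)$ (Lemma~\ref{lemma: element of order p}, with separate easy cases for a torus, a disconnected group with nontrivial identity component, and a finite $p$-group), and lets $J$ be its preimage in $U(n)$. By Lemma~\ref{lemma:B3}, $J\cong S^{1}\times\integers/p$ is abelian, so its irreducibles are one-dimensional and $J$ isotypic would force $J\subseteq S^{1}$; hence $J$ is \emph{automatically} polytypic, the case ``$\complexes^{n}$ is $N$-homogeneous'' cannot occur for this $J$, and no induction on $|\Hbar|$ or analysis of $W\otimes\complexes^{m}$ is needed. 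Centrality of $V$ supplies the normality of $J$ in $H$ that keeps $\lambdaglom{J}$ inside $\weakfixed{H}$. If you replace your $N$ by this $J$ and your join by the coarsen-then-refine mechanism above, your outline closes up into the paper's proof.
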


The organization of the paper is as follows. 
In Section~\ref{section: Ln small}, we define $\Lcal_{n}$ and
investigate the first nontrivial examples ($n=2,3$) directly and in
detail.  Section~\ref{section:Group} does some group-theoretic setup
that is needed for studying fixed points.  In
Section~\ref{section:Conditions}, we find a condition on~$H$ that
implies contractibility of $\weakfixed{H}$
(Theorem~\ref{thm:ContractJ}), and in Section~\ref{section:MainProof}
we prove Theorem~\ref{thm: main theorem}.

Finally, in Section~\ref{section: examples of fixed points} we compute two
illustrative examples of fixed points.  First, we exhibit a projective
elementary abelian $2$-subgroup of $U(2)$ whose fixed points on
$\Lcal_{2}$ are not contractible (Proposition~\ref{prop:
  non-contractible}), which shows that Theorem~\ref{thm: main theorem}
is group-theoretically sharp. In contrast, we also show that the same
subgroup, embedded in $U(3)$, has contractible fixed points on
$\Lcal_{3}$ (Proposition~\ref{proposition: L3 fixed contractible}).
In future work \cite{Banff-follow-on}, we plan to establish further
restrictions on the projective abelian $p$-subgroups of $U(n)$ that
can have noncontractible fixed point sets by considering their
representation
theory in greater detail.\\

\noindent{\bf{Notation and Terminology}}

We generally do not distinguish notation for a category and its
nerve, and we trust to context to make clear which is being discussed.

By ``subgroup,'' we always mean a closed subgroup. If $G$ is a group,
we write $Z(G)$ for the center of $G$ and $\orderp{G}$ for the elements 
of $G$ of order $p$. \\

\noindent{\bf{Acknowledgements:}}
The authors thank the Banff International Research Station and the 
Clay Mathematics Institute for financial support, and the anonymous referee 
for a helpful and thorough reading of the paper. The first and third
authors are grateful to Bill Dwyer for preliminary discussions about this 
project. 

\section{Homotopy type of $\Lcal_{n}$ for small $n$}
\label{section: Ln small}

In this section, we give an introduction to $\Lcal_{n}$. 
We define and describe it in some detail, and we look at the two 
lowest-dimensional examples. 

The partition complex $\Lcal_{n}$ is a poset category internal to
topological spaces.  An object $\lambda$ in $\Lcal_{n}$ is a proper,
unordered decomposition of $\complexes^n$ into nonzero, mutually orthogonal
subspaces $v_1, \ldots, v_m$, where by \emph{proper} we mean that
$m>1$.  To topologize the set of objects,
$\Obj\left(\Lcal_{n}\right)$, let $\Gr_{k}\left(\complexes^{n}\right)$
denote the Grassmannian of $k$-planes in $\complexes^{n}$.  The
set of objects of $\Lcal_{n}$ is given the subspace topology
\[
\Obj\left(\Lcal_{n}\right)
\subseteq 
\coprod_{m>1}
\left[
\left(
         \coprod_{k\geq 1} 
       \Gr_{k}\left(\complexes^{n}\right)
\right)^{\!m}
/\Sigma_{m}\right].
\]
Note that the connected components $\Obj\left(\Lcal_{n}\right)$ are in
one-to-one correspondence with unordered partitions of the integer $n$
as the sum of at least two positive integers.

Morphisms in $\Lcal_{n}$ are given by coarsenings; that is, there is a
morphism from $\{v_1, \ldots, v_m\}$ to $\{w_1, \ldots, w_{m'}\}$ if
and only if for each $v_i$ there exists a $j$ such that
$v_{i}\subseteq w_{j}$.  Note that between any two objects there is at
most one morphism.
In particular, there is a monomorphism
\[
\Morph\left(\Lcal_{n}\right)
\hookrightarrow \Obj\left(\Lcal_{n}\right)\times \Obj\left(\Lcal_{n}\right)
\]
via the source and target maps, and accordingly we topologize the morphism
space by the subspace topology of the product topology. 

Since $\Lcal_{n}$ is a category internal to topological spaces, its
nerve is a simplicial space (or the realization of that simplicial
space, depending on context). Simplicial degree zero of the nerve
consists simply of the space of objects of $\Lcal_{n}$. The first
simplicial degree consists of the space of morphisms, the second
simplicial degree consists of the space of composable morphisms
(topologized as a subspace of the two-fold product of the morphism
space), and so forth. (See, for example, Section 5.1 of~\cite{Libman}.)

The action of $U(n)$ on $\complexes^{n}$ induces a continuous action
of $U(n)$ on the category $\Lcal_{n}$ (i.e., the action is continuous
on the space of objects and the space of morphisms). Hence the nerve
of $\Lcal_{n}$ likewise has an action of $U(n)$, as do the nerves of
any subcategories closed under the action of $U(n)$. 
By inspection of the simplices in the nerve of $\Lcal_{n}$, we see that 
\[
\left(\Nerve\left(\Lcal_{n}\right)\right)^{H}
        \cong \Nerve\left(\weakfixed{H}\right).
\]

To provide the reader with some intuition about $\Lcal_{n}$, at least
in low dimensions, we work out concrete information about $\Lcal_{2}$
and $\Lcal_{3}$.  (Observe that $\Lcal_{1}$ is empty, since
$\complexes$ has no proper partitions.)  For $\Lcal_{2}$, the smallest
interesting example of a unitary partition complex, we can actually
find the homeomorphism type.  For the more complicated example of
$\Lcal_3$, we exhibit its homotopy type as a homotopy pushout diagram
and prove that it is simply connected (Propositions~\ref{prop: pushout
  diagram for L3} and~\ref{proposition: simply connected}).

To begin our study of $\Lcal_{2}$, we observe that a proper partition
of $\complexes^{2}$ can only be a partition into two orthogonal lines.
Since there are no refinements and no proper coarsenings of such a
partition, the poset category of partitions of $\complexes^{2}$ has only
identity morphisms, and $\Lcal_{2}$ is homeomorphic to its space of
objects.

\begin{proposition}   \label{prop: RP2}
The space $\Lcal_2$ is homeomorphic to $\reals P^2$.
\end{proposition}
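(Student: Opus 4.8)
The plan is to identify the space of objects of $\Lcal_2$ directly, since $\Lcal_2$ is homeomorphic to that space (there are only identity morphisms). A proper partition of $\complexes^2$ into nonzero orthogonal subspaces must consist of two orthogonal lines $\{v,w\}$, and since $\dim_\complexes \complexes^2 = 2$ the second line is forced to be $w = v^\perp$. Thus an object is determined by an unordered pair $\{v, v^\perp\}$, and I would show that $v \mapsto \{v, v^\perp\}$ induces a homeomorphism from the quotient of the space of lines by the involution $\iota \colon v \mapsto v^\perp$ onto $\Obj(\Lcal_2)$. The space of lines in $\complexes^2$ is $\complexes P^1$, so I would first exhibit a continuous bijection $\complexes P^1/\iota \to \Obj(\Lcal_2)$; since $\complexes P^1/\iota$ is compact and $\Obj(\Lcal_2)$ is Hausdorff, this bijection is automatically a homeomorphism.

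Next I would identify $\complexes P^1$ with the Riemann sphere $S^2$ in the standard way, using the inhomogeneous coordinate $z = b/a$ for the line spanned by $(a,b)$ (with $z = \infty$ for the line spanned by $(0,1)$). The heart of the argument is to compute the involution $\iota$ in this coordinate. Using the standard Hermitian inner product, the orthogonal complement of the line through $(a,b)$ is the line through $(\bar b, -\bar a)$, so in the coordinate $z$ the involution becomes $z \mapsto -1/\bar z$.

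The key step is then to recognize $z \mapsto -1/\bar z$ as the antipodal map on $S^2$. I would verify this by a direct computation through stereographic projection: writing a point of $S^2 \subseteq \reals^3$ as $(x_1, x_2, x_3)$ with coordinate $z = (x_1 + i x_2)/(1 - x_3)$, a short calculation shows that $-1/\bar z$ is exactly the coordinate of the antipode $(-x_1, -x_2, -x_3)$. Since the antipodal involution on $S^2$ is free, its quotient is $\reals P^2$ by definition, and combining this with the homeomorphism $\complexes P^1/\iota \cong \Obj(\Lcal_2) \cong \Lcal_2$ yields the result.

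I expect the main obstacle to be the bookkeeping with the Hermitian structure and the various conventions: getting the orthogonal-complement formula and the induced formula in the coordinate $z$ exactly right (signs and conjugations included), and then matching $z \mapsto -1/\bar z$ with the antipodal map rather than with some reflection or rotation. Everything else — the reduction to the space of objects, the compact-to-Hausdorff criterion for upgrading a continuous bijection to a homeomorphism, and the identification of a free-involution quotient of $S^2$ with $\reals P^2$ — is routine once the involution has been correctly identified.
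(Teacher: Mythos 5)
Your proposal is correct and follows the same skeleton as the paper's proof: reduce to the object space (only identity morphisms), identify it as $\complexes P^{1}$ modulo the orthogonal-complement involution, and compute that involution in an affine coordinate as $z\mapsto -1/\zbar$. The only divergence is the final identification of the quotient. The paper restricts to the fundamental domain $\|z\|\leq 1$ (since the involution swaps the inside and outside of the unit circle) and recognizes the quotient as the disk with antipodal boundary points identified, i.e., the standard CW model of $\reals P^{2}$. You instead observe that $z\mapsto -1/\zbar$ is precisely the antipodal map of $S^{2}$ under stereographic projection, so the quotient is $S^{2}/(x\mapsto -x)=\reals P^{2}$ by definition. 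Your computation checks out (the antipode of the point with coordinate $z$ has coordinate $-(x_{1}+ix_{2})/(1+x_{3})=-1/\zbar$), and this route is arguably cleaner: it avoids the fundamental-domain bookkeeping and makes the freeness of the involution manifest, at the cost of the explicit disk picture that the paper later reuses when computing the fixed points of $\tau$ in Proposition~\ref{prop: non-contractible}. The compactness-plus-Hausdorff argument for upgrading the continuous bijection to a homeomorphism is a point the paper passes over silently, so including it is a small improvement rather than a gap.
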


\begin{proof}
A partition of $\complexes^{2}$ is an unordered pair consisting of a
  line in $\complexes^2$ and its orthogonal complement. The space of
  lines in $\complexes^{2}$ is the projective space $\complexes P^1$.
  Because the pair is unordered, $\Lcal_2$ is the quotient of
  $\complexes P^1$ by the action of the involution that
  interchanges a line and its orthogonal complement.

More explicitly, note that the line spanned by
  $(0,1)$ has orthogonal complement spanned by $(1,0)$ (a special
  case), and in general the line in $\complexes^2$ spanned by
  $(1,z)$ with $z\in\complexes\backslash\{0\}$ has orthogonal
  complement spanned by $(1, -1/\zbar)$.  Thus $\Lcal_{2}$ is
  homeomorphic to the quotient of $S^{2}\cong\complexes P^{1}\cong
  \complexes\cup\{\infty\}$ by the involution $z \mapsto -1/\zbar$
  (and $0\leftrightarrow{}\infty$).  The involution exchanges
  points in the region $\|z\|>1$ with those in the region $\|z\|<1$,
  so we only need to consider the quotient of the unit disk $\|z\|\leq
  1$ by the action on the boundary circle. When $\|z\| =1$, we can
  write $z= e^{i \varphi}$ and $-1/\zbar= -e^{i\varphi}$ for some
  $\varphi\in\reals$, whence the transformation is the antipodal map
  on the boundary of the unit disk.  We conclude that $\Lcal_{2}$ is
  homeomorphic to the quotient space obtained from the disk $\|z\|\leq
  1$ by identifying antipodal points on the boundary circle, namely
  $\reals P^{2}$.
\end{proof}

We now turn to $\Lcal_3$, which is more complicated.  There are two
connected components in the space of objects, corresponding to
partitions of $\complexes^{3}$ into three lines and partitions into a
line and a $2$-plane. The action of $U(n)$ on $\Lcal_{n}$ allows the
following explicit identification of the connected components of the
object space: the transitive action of $U(n)$ on each connected
component exhibits each component as the homogeneous space $U(n)/I$
once we have computed the isotropy group $I$ of a typical object in
the component.  A decomposition of $\complexes^3$ into a
$1$-dimensional subspace $v_{1}$ and its 2-dimensional orthogonal
complement $v_{2}$ has isotropy group conjugate to $U(1)\times U(2)$,
since an element of $U(3)$ that stabilizes the partition $\{v_{1},
v_{2}\}$ must stabilize $v_{1}$ and $v_{2}$ individually.  On the
other hand, elements of the isotropy group of a decomposition of
$\complexes^3$ into three lines can act nontrivially on each line, but
can also permute the lines, because the lines all have the same
dimension. Hence this isotropy group is
$\left(U(1)\right)^3\rtimes\Sigma_3$. We conclude that the object
space has homeomorphism type
\[
\left[\strut U(3)/\left(U(1)\times U(2)\right)\right]
\ \sqcup\  
\left[\strut U(3)/\left( \left(U(1)\right)^3\rtimes\Sigma_3 \right)\right].
\]
We write $\Gr(1,2)$ for the first component, and $\Gr(1,1,1)$ for the second. 

The next task is to identify the morphism space of $\Lcal_{3}$. Each
connected component of the object space gives a connected component of
the morphism space consisting of identity morphisms, so two components
of the morphism space of $\Lcal_{3}$ are given by $\Gr(1,2)$ and
$\Gr(1,1,1)$. (These components are precisely the degenerate simplices
in simplicial dimension $1$ of the nerve of $\Lcal_{3}$.)

Unlike $\Lcal_{2}$, the category $\Lcal_{3}$ has nonidentity
morphisms, given by coarsenings from $\Gr(1,1,1)$ to $\Gr(1,2)$. The
action of $U(3)$ on these morphisms is transitive, so again we can
identify the homeomorphism type of this component of the morphism
space by finding the isotropy group of a sample morphism, say the
morphism
\[
\complexes\oplus\complexes\oplus\complexes 
   \longrightarrow \complexes\oplus\complexes^{2}
\]
that takes the standard basis to itself in the natural way. 
There is exactly one morphism between these objects, 
so for it to be fixed
it is necessary and sufficient that both the source and the target
be fixed. The isotropy group $I$ of the morphism is therefore the intersection
of the isotropy groups of the source and the target, that is, 
\begin{align*}
I &= \left(U(1)^3\rtimes\Sigma_3 \right)
     \cap \left(\strut U(1)\times U(2)\right) \\
 & = U(1)\times \left(\strut U(1)^{2}\rtimes\Sigma_{2} \right).
\end{align*}

\begin{proposition}   \label{prop: pushout diagram for L3}
The nerve of $\Lcal_{3}$ is homeomorphic to the double mapping 
cylinder of the diagram
\begin{equation}\label{eq: pushout diagram for L3}
\begin{CD}
U(3)/I @>{\qquad\qquad }>>U(3)/\left(\strut U(1)\times U(2)\right)\\
@VVV \\
U(3)/\left( \left(U(1)\right)^3\rtimes\Sigma_3 \right). 
\end{CD}
\end{equation}
\end{proposition}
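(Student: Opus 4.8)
The plan is to compute the geometric realization of the simplicial space $\Nerve(\Lcal_{3})$ directly, by identifying its non-degenerate simplices in each dimension. The key structural observation is that $\Lcal_{3}$ is a poset with at most one morphism between any two objects, and that every non-identity morphism is a coarsening from the three-line component $\Gr(1,1,1)$ to the line-plus-plane component $\Gr(1,2)$. I would first record the morphism space as the disjoint union of the identity morphisms, which form a copy of $\Obj(\Lcal_{3}) = \Gr(1,2) \sqcup \Gr(1,1,1)$ (the image of the degeneracy $s_{0}$), together with the space of non-identity coarsenings, which by the isotropy computation preceding the proposition is exactly $U(3)/I$. Since these two kinds of morphism lie in different connected components of $\Morph(\Lcal_{3})$, the non-degenerate $1$-simplices form the \emph{clopen} subspace $U(3)/I \subseteq \Morph(\Lcal_{3})$, whose two face maps are the target and source of the coarsening, landing in $\Gr(1,2)$ and $\Gr(1,1,1)$ respectively. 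These are precisely the two maps of the span in~\eqref{eq: pushout diagram for L3}.

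Next I would argue that there are no non-degenerate simplices in dimensions $\geq 2$. A non-degenerate $2$-simplex is a pair of composable non-identity morphisms $a \to b \to c$; but a non-identity morphism has source in $\Gr(1,1,1)$ and target in $\Gr(1,2)$, while $\Gr(1,2)$ admits no further proper coarsening, since the only coarsening of a line-plus-plane partition is the excluded total partition $\complexes^{3}$ itself. Hence no two non-identity morphisms compose, so there are no non-degenerate $2$-simplices, and the same obstruction rules them out in every higher dimension.

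With the non-degenerate simplices concentrated in dimensions $0$ and $1$, the skeletal filtration of the geometric realization stabilizes after the first stage, so $\lvert \Nerve(\Lcal_{3}) \rvert$ equals its $1$-skeleton. That $1$-skeleton is obtained from the object space $\Obj(\Lcal_{3})$ by attaching $U(3)/I \times \Delta^{1}$, collapsing the degenerate part and gluing the two endpoints of $\Delta^{1}$ along the source and target maps. This is exactly the double mapping cylinder of the span~\eqref{eq: pushout diagram for L3}, and identifying the two descriptions completes the argument.

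The step I expect to require the most care is this last identification: justifying that the realization is \emph{homeomorphic}, and not merely weakly equivalent, to the double mapping cylinder. The skeletal filtration builds the $n$-skeleton from the $(n-1)$-skeleton by pushing out along non-degenerate $n$-simplices, but for a point-set homeomorphism one must know the degeneracies split off cleanly and that the attaching maps are closed cofibrations. The clopen decomposition of $\Morph(\Lcal_{3})$ into identities and coarsenings is what makes this tractable here, since it lets me separate the degenerate locus without topological trouble; I would verify that $U(3)/I$ is a compact (hence closed) homogeneous space, so that the relevant inclusions are closed cofibrations and the resulting pushout is a genuine homeomorphism rather than just a homotopy equivalence.
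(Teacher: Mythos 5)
Your proposal is correct and follows essentially the same route as the paper: identify the nondegenerate simplices (concentrated in dimensions $0$ and $1$ because no two non-identity coarsenings compose), recognize the nondegenerate part of the morphism space as $U(3)/I$ with source and target maps giving the span, and conclude that the realization is the double mapping cylinder. Your added care about the clopen splitting of the degenerate locus and the closed-cofibration point is a reasonable elaboration of the point-set details the paper leaves implicit.
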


\begin{proof}
The nerve of the category $\Lcal_{3}$ has nondegenerate simplices only 
in simplicial dimensions $0$ and $1$, since there are no composable morphisms
that do not involve an identity morphism. 
Diagram~\eqref{eq: pushout diagram for L3} has the two connected components
of the object space of $\Lcal_{3}$ in the upper right and lower left corners,
and the nondegenerate part of the morphism space in the upper left corner. 
The double mapping cylinder is homeomorphic to the realization of the 
simplicial space that gives the nerve of $\Lcal_{3}$. 
\end{proof}

From Proposition~\ref{prop: pushout diagram for L3},
we obtain the following homotopy-theoretic result. 
\begin{proposition}   \label{proposition: simply connected}
The space $\Lcal_3$ is simply connected.
\end{proposition}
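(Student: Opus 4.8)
The plan is to read off $\pi_1(\Lcal_3)$ directly from the homotopy pushout description of Proposition~\ref{prop: pushout diagram for L3} by means of the Seifert--van Kampen theorem for double mapping cylinders. Write $A = U(3)/I$, $B = U(3)/\left(U(1)\times U(2)\right)$, and $C = U(3)/\left((U(1))^3\rtimes\Sigma_3\right)$. Because $U(3)$ is connected, all three homogeneous spaces are path-connected, so van Kampen applies and gives
\[
\pi_1(\Lcal_3)\;\cong\;\pi_1(B)\ast_{\pi_1(A)}\pi_1(C),
\]
the pushout in groups of the maps $f_*\colon\pi_1(A)\to\pi_1(B)$ and $g_*\colon\pi_1(A)\to\pi_1(C)$ induced by the two legs of diagram~\eqref{eq: pushout diagram for L3}. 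It therefore suffices to compute the three fundamental groups and the two induced maps.

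To compute each $\pi_1$, I would feed the fibration $K\to U(3)\to U(3)/K$ into the long exact sequence of homotopy groups. Using $\pi_1(U(3))\cong\integers$ (detected by the determinant) and $\pi_0(U(3))=\ast$, the relevant segment is
\[
\pi_1(K)\longrightarrow \integers \longrightarrow \pi_1(U(3)/K)\longrightarrow \pi_0(K)\longrightarrow \ast.
\]
For each of the three stabilizers the identity component contains a scalar factor $U(1)$ whose fundamental group already surjects onto $\pi_1(U(3))$ under the determinant; hence the first map is onto and $\pi_1(U(3)/K)\cong\pi_0(K)$. This yields $\pi_1(B)\cong\pi_0\!\left(U(1)\times U(2)\right)=\{1\}$ (so $B$ is in fact simply connected, as expected since $B\cong\complexes P^{2}$), $\pi_1(C)\cong\pi_0\!\left((U(1))^3\rtimes\Sigma_3\right)\cong\Sigma_3$, and $\pi_1(A)\cong\pi_0(I)\cong\Sigma_2$.

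Next I would pin down the two induced maps. Since $\pi_1(B)$ is trivial, $f_*$ is automatically trivial. For $g_*$, naturality of the connecting homomorphism identifies $g_*\colon\pi_1(A)\to\pi_1(C)$ with the map $\pi_0(I)\to\pi_0\!\left((U(1))^3\rtimes\Sigma_3\right)$ induced by the inclusion $I\hookrightarrow (U(1))^3\rtimes\Sigma_3$ of stabilizers. Concretely, $I=U(1)\times\left(U(1)^2\rtimes\Sigma_2\right)$ is the stabilizer of the sample morphism, and its $\Sigma_2$-factor interchanges the two lines that coarsen into the $2$-plane; under the inclusion into the full wreath product this $\Sigma_2$ is generated by the transposition of those two lines. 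Thus $g_*\colon\Sigma_2\to\Sigma_3$ sends the generator to a transposition.

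Finally I would compute the group pushout. Since $f_*$ is trivial, $\pi_1(B)\ast_{\pi_1(A)}\pi_1(C)$ is the quotient of $\Sigma_3$ by the normal closure of $g_*(\Sigma_2)$, i.e.\ by the normal closure of a single transposition. As all transpositions are conjugate in $\Sigma_3$ and generate it, this normal closure is all of $\Sigma_3$, so the pushout is trivial and $\Lcal_3$ is simply connected. I expect the one step requiring genuine care to be the identification of $g_*$: the entire argument hinges on verifying, through naturality of the connecting maps, that the generator of $\pi_1(A)\cong\Sigma_2$ lands on a genuine transposition in $\pi_1(C)\cong\Sigma_3$ rather than on the identity, since it is precisely the nontriviality of this image that normally generates $\Sigma_3$ and collapses the pushout.
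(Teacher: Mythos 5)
Your proof is correct and follows essentially the same route as the paper: the homotopy pushout from Proposition~\ref{prop: pushout diagram for L3}, fundamental groups of the corners computed from the fiber sequences $K\to U(3)\to U(3)/K$ (giving $1$, $\Sigma_3$, and $\Sigma_2$), identification of the map $\Sigma_2\to\Sigma_3$ as sending the generator to a transposition, and van Kampen to conclude the pushout group is $\Sigma_3$ modulo the normal closure of a transposition, which is trivial. The only cosmetic differences are that you justify surjectivity of $\pi_1(K)\to\pi_1(U(3))$ via the determinant rather than the paper's maximal-torus argument, and you make explicit the naturality-of-connecting-maps step that the paper leaves implicit.
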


\begin{proof}
  We use diagram~\eqref{eq: pushout diagram for L3} and the
  Seifert-Van Kampen theorem, noting that all of the spaces in
  \eqref{eq: pushout diagram for L3} are path connected. To find the
  fundamental groups of the corners, recall that if $G$ is a 
  compact Lie group with maximal torus~$T$, then the natural map
  $\pi_{1}T\rightarrow\pi_{1}G$ is an epimorphism (e.g., Corollary~5.17
  in \cite{MimuraToda}).
  For the fundamental group of $U(3)/(U(1)\times U(2))$, we observe
  that $U(1)\times U(2)$ contains the maximal torus of $U(3)$ and so
  $U(1)\times U(2)\rightarrow U(3)$ induces a surjection on
  fundamental groups. Since $U(1)\times U(2)$ is connected, we
  conclude from the fiber sequence
\[ 
U(1) \times U(2) \rightarrow U(3) \rightarrow U(3)/(U(1) \times U(2))
\] 
that the upper right corner of
\eqref{eq: pushout diagram for L3} is simply connected.

To find the fundamental group of the lower left corner 
of~\eqref{eq: pushout diagram for L3}, we consider the fiber sequence 
\[ 
U(1)^3\rtimes\Sigma_3 
         \rightarrow U(3) 
         \rightarrow U(3)/\left(\strut U(1)^3\rtimes\Sigma_3\right).
\]
Just as before, we find that the map
$U(1)^3\rtimes\Sigma_3 \rightarrow U(3)$ induces a surjection on 
fundamental groups. Continuing the long exact sequence in homotopy gives us
\[
 \pi_{1} \left[ U(3)/\left(\strut U(1)^3\rtimes\Sigma_3\right) \right]
      \cong \pi_{0}\left[U(1)^3\rtimes\Sigma_3 \right] 
      \cong \Sigma_{3}. 
\]
A similar argument tells us that in the upper left corner
of~\eqref{eq: pushout diagram for L3}, we have
\[
\pi_{1} \left[\strut U(3)/I\right] \cong \pi_{0} I\cong \Sigma_{2}, 
\]
and the left vertical map on fundamental groups is the inclusion of 
$\Sigma_{2}\hookrightarrow\Sigma_{3}$ by $(1\ 2)\mapsto (1)\ (2\ 3)$.

The Seifert-Van Kampen theorem now tells us that the fundamental group 
of $\Lcal_{3}$, which is the homotopy pushout 
of~\eqref{eq: pushout diagram for L3}, is given by taking the free product of
the fundamental groups of the lower left and upper right, namely 
$\Sigma_{3}$ and the trivial group, and taking the quotient by the normal 
subgroup generated by the fundamental group of the upper left corner, 
which is $\Sigma_{2}$. But the smallest normal subgroup of $\Sigma_{3}$
containing $\Sigma_{2}$ is actually $\Sigma_{3}$ itself. 
We conclude that $\pi_1(\Lcal_3)$ is trivial.  
\end{proof}

\section{Group-theoretic results}\label{section:Group}

This section is devoted to establishing preliminary group-theoretic
results about $U(n)$ and related groups that we will need in later
sections.  More precisely, the goal of this section is to establish
criteria for $H\subseteq U(n)$ that allow us to find suitable elements
of $H$ that are central and of order $p$. These criteria will be
needed in order to apply Corollary~\ref{corollary:B4} and construct
contractions of fixed point spaces.

Let $PU(n)$ denote the projective unitary group, i.e., the quotient
of $U(n)$ by its center $S^1$. If $H$ is a subgroup of $U(n)$, we 
use $\Hbar$ to denote its image in $PU(n)$, so $\Hbar\cong H/(S^1\cap H)$.

We recall the following terminology from Section~3.1 of
\cite{GoodmanWallach}.  Let $v$ be a complex representation of a
group $G$; then $v$ splits into irreducibles as \[v\cong \bigoplus_k
v_k^{\oplus m_k}\] where for different values of $k$, the corresponding
irreducible representations $v_k$ are non-isomorphic. This decomposition is not
canonical; however, if we group all the isomorphic irreducibles
together into $w_k = v_k^{\oplus m_k}$, the decomposition
\[
v\cong \bigoplus_k w_k 
\]
is canonical. It is called the \emph{isotypic} decomposition of $v$,
and we call the subspaces $w_k$ the \emph{isotypic components}
of $v$. If $v$ has only
one isotypic component, we say it is an \emph{isotypic
representation}; otherwise, we say it is \emph{polytypic}.

For our situation, we will usually be considering a closed subgroup
$H\subseteq U(n)$ acting through the standard representation of $U(n)$
on $\complexes^{n}$. In this case we say that $H$ \emph{acts
isotypically/polytypically} or that $H$ \emph{is isotypic/polytypic} if
$\complexes^{n}$ is isotypic/polytypic as an $H$-representation. 

\begin{lemma} \label{lemma:B3} 
  Let $\integers/p$ be a subgroup of $PU(n)$ and let $J$ be its
  inverse image in $U(n)$. Then $J$ is polytypic.  In fact, a
  generator of $\integers/p\subseteq PU(n)$ can be lifted to an
  element of order $p$ in $U(n)$ and $J \cong S^1 \times \integers/p$.
\end{lemma}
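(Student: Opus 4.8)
The plan is to work directly with the short exact sequence $1 \to S^1 \to J \to \integers/p \to 1$ coming from the definition of $J$ as the inverse image of $\integers/p$: first produce a lift of the generator of order exactly $p$, then use that lift to split the sequence, and finally read off the polytypic conclusion from the eigenvalues of the lift.

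First I would pick any $g \in U(n)$ whose image $\overline{g}$ generates $\integers/p \subseteq PU(n)$. Since $\overline{g}^{\,p} = 1$, the element $g^{p}$ lies in the kernel $S^1$, say $g^{p} = \zeta \cdot \Id$ with $\zeta \in S^1$. Because $S^1$ is divisible, I can choose $\omega \in S^1$ with $\omega^{p} = \zeta^{-1}$ and set $h = \omega g$; then $h^{p} = \omega^{p} g^{p} = \Id$. As $\overline{h} = \overline{g}$ is a nontrivial element of $PU(n)$, the matrix $h$ is not a scalar, so $h \neq \Id$, and since $p$ is prime this forces $h$ to have order exactly $p$. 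This is the desired lift.

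Next I would show that $J = S^1 \cdot \langle h \rangle$ is a direct product. Given $x \in J$, its image $\overline{x}$ equals $\overline{h}^{\,k}$ for some $k$, so $x h^{-k}$ maps to the identity and hence lies in $S^1$; thus $x \in S^1 \langle h \rangle$, giving $J = S^1 \langle h \rangle$. The two factors commute since $S^1$ is central in $U(n)$, and they intersect trivially because $h^{k} \in S^1$ forces $\overline{h}^{\,k} = 1$, i.e. $p \mid k$. Therefore $J \cong S^1 \times \integers/p$; in particular $J$ is abelian. For the polytypic statement, note that since $J$ is abelian, $\complexes^{n}$ decomposes into simultaneous eigenlines, and since the central $S^1$ acts on all of $\complexes^{n}$ by the single standard character $\omega \mapsto \omega$, every irreducible $J$-constituent agrees on its $S^1$-part and so the isotypic components of $\complexes^n$ as a $J$-representation are exactly the eigenspaces of $h$. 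Because $h$ is not a scalar, it has at least two distinct eigenvalues, yielding at least two isotypic components, so $J$ is polytypic. The only step that requires genuine care is the lifting: divisibility of $S^1$ is what makes the splitting possible, and one must check that the lift $h$ has order exactly $p$ rather than a proper divisor; once the identification $J \cong S^1 \times \integers/p$ is in hand, the rest is formal.
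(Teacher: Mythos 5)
Your proof is correct and takes essentially the same route as the paper: rescale an arbitrary lift of the generator by a $p$th root of the central scalar $g^{p}$ to obtain an order-$p$ lift, use centrality of $S^1$ to identify $J$ as the internal direct product $S^1 \times \integers/p$, and deduce polytypicity from the fact that the lift is not a scalar matrix. The only cosmetic difference is that you identify the isotypic components explicitly as the eigenspaces of $h$, whereas the paper argues by contradiction that an isotypic $J$ would have to lie inside $S^1$; these are the same observation stated in contrapositive forms.
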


\begin{proof}
  Let $A \in U(n)$ be such that its image $\Abar\in PU(n)$ generates
  the given $\integers/p$. Then $A^p$ is an element of the central $S^1
  \subseteq U(n)$ and is a diagonal matrix with all equal diagonal
  entries. Let $\alpha$ be some $p$th root of the diagonal entry of
  $A^p$. Define $B=\alpha^{-1} A$, so $B^p=\Id$ and the image of $B$ in 
  $PU(n)$ is $\Abar$. Then $\Abar\mapsto B$ determines a homomorphism
that splits the short exact sequence
\[
1\rightarrow S^1\rightarrow J\rightarrow \integers/p\rightarrow 1,
\]
and since $S^{1}$ is central, the map 
$S^1\times\integers/p\rightarrow J$ given by $(s,\Abar^b)\mapsto s B^b$ 
respects the multiplication in $J$ and is an isomorphism. 

It remains to show that $J$ is polytypic. Since $J$ is abelian, its
irreducible representations are all one-dimensional, so every element
of $J$ acts on every one-dimensional $J$-irreducible summand of
$\complexes^{n}$ by multiplication by a scalar. If $J$ were isotypic,
then an element of $J$ would have to act on every such summand 
by multiplication by the same scalar, i.e.,  $J$ would be contained
in $S^{1}$, which is false. Hence, $J$ is polytypic.
\end{proof}

The subgroups of $U(n)$ of greatest interest to us are the
$p$-toral subgroups.  We begin with a definition.

\begin{definition}\label{p-toral_def}
  A $p$-\emph{toral group} $H$ is an extension of a finite $p$-group
  by a torus.  In other words, there exists a short exact sequence
\[
1 \to T \to H \to P \to 1,
\] 
where $T$ is a torus and $P$ is a finite $p$-group. If $T\cong
(S^1)^{\times r}$, we call $r$ the \emph{rank} of $H$. The torus $T$
is the connected component of the identity of $H$, so we also
denote it by $H_0$.
\end{definition}

\begin{lemma} \label{lemma:HbarpToral} 
  Any quotient of a $p$-toral subgroup $H$ by a closed normal subgroup
  $K$ is $p$-toral.
\end{lemma}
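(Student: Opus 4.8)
The plan is to prove that any quotient $H/K$ of a $p$-toral group $H$ by a closed normal subgroup $K$ is again $p$-toral. Recall that $H$ being $p$-toral means it fits in a short exact sequence $1 \to T \to H \to P \to 1$ with $T$ a torus and $P$ a finite $p$-group; equivalently, the identity component $H_0 = T$ is a torus and the component group $\pi_0(H) = H/H_0 \cong P$ is a finite $p$-group. The strategy is to verify these two conditions directly for the quotient $\overline{H} := H/K$, namely that its identity component is a torus and its group of components is a finite $p$-group.

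First I would analyze the identity component. The quotient map $q\colon H \to \overline{H}$ is a continuous surjective homomorphism of compact Lie groups, so it sends $H_0$ onto $(\overline{H})_0$; in fact $(\overline{H})_0 = q(H_0)$ because $q(H_0)$ is a connected subgroup containing the identity, and it has finite index in $\overline{H}$ (being the image of the finite-index $H_0$), hence is open, hence equals the identity component. Now $q(H_0) = q(T)$ is a continuous homomorphic image of the torus $T$, and a quotient of a torus by a closed subgroup is again a torus (the image of a compact connected abelian Lie group is a compact connected abelian Lie group, and such a group is a torus). Therefore $(\overline{H})_0$ is a torus.

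Next I would handle the component group. Consider the composite $H \to \overline{H} \to \pi_0(\overline{H}) = \overline{H}/(\overline{H})_0$. Since $q$ carries $H_0$ into $(\overline{H})_0$, this composite factors through $\pi_0(H) = H/H_0 \cong P$, giving a surjection $P \twoheadrightarrow \pi_0(\overline{H})$. A quotient of the finite $p$-group $P$ is again a finite $p$-group, so $\pi_0(\overline{H})$ is a finite $p$-group. Combining the two paragraphs, $\overline{H}$ has torus identity component and finite $p$-group component group, which is exactly the condition for $\overline{H}$ to be $p$-toral.

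The only genuinely substantive point — and the step I expect to require the most care — is the claim that the continuous homomorphic image $q(T)$ of a torus is again a torus. This relies on the structure theory of compact Lie groups: $q(T)$ is compact, connected, and abelian, and every such group is a torus. I would cite this from standard Lie theory rather than reprove it. Everything else is formal manipulation with identity components and component groups, using that the image of the identity component is the identity component of the image (valid since the quotient of compact Lie groups is again a compact Lie group and $q$ is surjective with $H_0$ of finite index).
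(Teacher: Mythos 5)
Your proof is correct and follows essentially the same route as the paper: the paper builds the morphism of short exact sequences whose bottom row is $1 \to T/(K\cap T)\to H/K \to Q \to 1$ and then invokes exactly your two key facts, namely that a quotient of the finite $p$-group $P$ is a finite $p$-group and that a compact connected abelian Lie group is a torus. Your extra care in identifying $q(H_0)$ with the identity component of $H/K$ is a sound (and slightly more explicit) packaging of the same argument.
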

In particular, if $H$ is a $p$-toral subgroup of $U(n)$, then its
image $\Hbar$ in $PU(n)$ is also $p$-toral.

\begin{proof}
  Since $H$ is $p$-toral, we have a short exact sequence 
\[1 \to T \to H \to P \to 1,
\]
where $T$ is a torus and $P$ is a $p$-group. We obtain a morphism of short
exact sequences
\[ 
\begin{CD}
1 @>>> T @>>> H @>>> P @>>> 1\\
@. @VVV @VVV @VVV \\
1 @>>> T / (K \cap T) @>>> H/K @>>>
  Q @>>> 1.
\end{CD}
\] 
The map $H\rightarrow H/K$ is surjective, as is $H/K\rightarrow Q$,
so $P\rightarrow Q$ is surjective, implying that $Q$ is a $p$-group. Further, 
$T/(K\cap T)$ is a compact connected abelian Lie group, and so must 
be a torus.
\end{proof}

For the remainder of this section, we focus on finding certain elements
of order~$p$ in $\Hbar$. 

\begin{definition}\label{H/p_def}
  For a $p$-toral group $H$, let $H/p$ denote the quotient of $H$
  by its normal subgroup $H^p[H,H]$, the
  normal subgroup generated by $p$-th powers and commutators.
 \end{definition}

 For a finite group $H$, the subgroup $H^p[H,H] $ may be familiar to
 the reader as the Frattini subgroup of $H$, and $H/p$ as the Frattini
 quotient of H.  A $p$-toral group $H$ is elementary abelian if and
 only if both $H^p$ and $[H,H]$ are the trivial subgroup of $H$, i.e.,
 if and only if $H^p[H,H]$, the subgroup generated by both, is
 trivial.  Further, $H/p$ is an elementary abelian $p$-group, and the
 map $H \to H/p$ is initial among homomorphisms from $H$ to finite
 elementary abelian $p$-groups; thus we refer to $H/p$ as the maximal
 elementary abelian quotient of $H$.

The following lemma is the main technical tool for the proof of 
Theorem~\ref{thm: main theorem} in Section~\ref{section:MainProof}.
Part of the statement is actually Lemma~6.5 of \cite{ADL2}, for which we
have given a streamlined proof here.

\begin{lemma} \label{lemma: element of order p} 
  Let $H$ be a $p$-toral subgroup of $U(n)$ with image $\Hbar$
  in~$PU(n)$. If $\Hbar$ is nontrivial and not elementary abelian,
  then there exists a central element in $\Hbar$ of order~$p$ that
  lies in the kernel of $\Hbar\rightarrow\Hbar/p$.
\end{lemma}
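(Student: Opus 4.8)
The plan is to locate, inside the kernel
$N := \ker(\Hbar\to\Hbar/p)=\Hbar^{p}[\Hbar,\Hbar]$, a nontrivial finite normal $p$-subgroup of $\Hbar$, and then to extract a central element of order $p$ from it by the standard fixed-point argument for $p$-groups. By Lemma~\ref{lemma:HbarpToral}, $\Hbar$ is $p$-toral, so it sits in a short exact sequence $1\to\Hbar_{0}\to\Hbar\to\Pbar\to 1$ with $\Hbar_{0}$ a torus and $\Pbar=\Hbar/\Hbar_{0}$ a finite $p$-group. The hypothesis that $\Hbar$ is nontrivial and not elementary abelian is, by the discussion following Definition~\ref{H/p_def}, exactly the statement that $N\neq 1$.

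First I would pin down the identity component of $N$. The $p$-th power map is surjective on any torus, so $\Hbar_{0}\subseteq\Hbar^{p}\subseteq N$; since any connected subgroup of $\Hbar$ lands in $\Hbar_{0}$, we also have $N_{0}\subseteq\Hbar_{0}$, and hence $N_{0}=\Hbar_{0}$ in all cases. This lets me split into two cases producing the desired finite subgroup $M\subseteq N$. If $\Hbar_{0}\neq 1$, I set $M:=\orderp{\Hbar_{0}}$, the subgroup of elements of order dividing $p$ in the torus $\Hbar_{0}$; this is a nontrivial finite elementary abelian $p$-group, characteristic in $\Hbar$, and contained in $N$. If instead $\Hbar_{0}=1$, then $\Hbar=\Pbar$ is a finite $p$-group that is not elementary abelian, so $N$ is its nontrivial Frattini subgroup, and I set $M:=N$. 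In either case $M$ is a nontrivial finite normal $p$-subgroup of $\Hbar$ lying inside $N$.

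Next I would run the conjugation action of $\Hbar$ on $M$. Because $M$ is finite, hence discrete, and $\Hbar_{0}$ is connected, the continuous action of $\Hbar_{0}$ is trivial, so the $\Hbar$-action factors through the finite $p$-group $\Pbar$. A finite $p$-group acting on a nontrivial finite $p$-group has nontrivial fixed points, by the class-equation count $|M^{\Pbar}|\equiv|M|\equiv 0\pmod p$, so $M^{\Pbar}=M\cap Z(\Hbar)$ is a nontrivial $p$-group. Any element of order $p$ in $M\cap Z(\Hbar)$ is then central in $\Hbar$, has order $p$, and lies in $M\subseteq N$, which is precisely what is required.

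The routine but essential points to get right are the identification $N_{0}=\Hbar_{0}$ (which reduces the torus case to the $p$-torsion subgroup $\orderp{\Hbar_{0}}$) and, in the finite case, that $N$ really is a $p$-group; once these are in hand the argument is purely the $p$-group fixed-point lemma and presents no genuine obstacle. The only subtlety worth flagging is the continuity step that the connected torus $\Hbar_{0}$ must act trivially on the finite set $M$, since this is what allows the conjugation action to factor through the finite $p$-group $\Pbar$ and makes the class-equation count available.
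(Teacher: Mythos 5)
Your proof is correct and follows essentially the same route as the paper's: in the non-finite case you use the $p$-torsion subgroup $\orderp{\Hbar_0}$ of the torus (with $p$-divisibility placing it in the kernel) and in the finite case the nontrivial Frattini subgroup, then extract a central element via the $p$-group fixed-point/class-equation argument. The only cosmetic difference is that you package the paper's three cases into a uniform "finite normal $p$-subgroup inside the kernel" statement, and you justify the triviality of the $\Hbar_0$-action by connectedness rather than commutativity.
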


\begin{proof}
  By Lemma~\ref{lemma:HbarpToral}, $\Hbar$ is itself $p$-toral.  If
  $\Hbar$ is connected, then it is a (nontrivial) torus and has at
  least $p-1$ elements of order $p$. They are central in $\Hbar$
  because the torus is abelian, and map to the identity in $\Hbar/p$
  because a torus is $p$-divisible.

  Suppose that $\Hbar$ is not connected and has a nontrivial identity
  component $\Hbar_{0}$.  Let $\orderp{\Hbar_0}$ denote the group of
  elements of $\Hbar_0$ that have order $p$. The conjugation action of
  $\Hbar$ on itself preserves $\orderp{\Hbar_0}$, while $\Hbar_{0}$
  acts trivially on $\orderp{\Hbar_0}$ since $\Hbar_{0}$ is abelian,
  so we get an action of $\Hbar/\Hbar_0$ on $\orderp{\Hbar_0}$.  By
  assumption $\Hbar_0$ is nontrivial, so the set $\orderp{\Hbar_0}$
  has $p^{\rank(\Hbar_{0})} > 1$ elements.  The nontrivial $p$-group
  $\Hbar/\Hbar_0$ fixes the identity element in $\orderp{\Hbar_0}$, so
  by the orbit decomposition of $\orderp{\Hbar_{0}}$, there exist at
  least $p-1$ other elements of $\orderp{\Hbar_{0}}$ fixed by
  $\Hbar/\Hbar_0$. These elements are in $Z(\Hbar)$, and because
  $\Hbar_{0}$ is a torus and is $p$-divisible, we know they also map
  to the identity in $\Hbar/p$.

  Now suppose that $\Hbar$ is not connected and has trivial identity
  component, i.e., $\Hbar$ is a finite $p$-group.  Since $\Hbar$ is
  not elementary abelian, the subgroup $\Hbar^{p}[\Hbar,\Hbar]$ is a
  nontrivial subgroup of $\Hbar$, and it is also normal. But a
  nontrivial normal subgroup of a finite $p$-group must have
  nontrivial intersection with the center, and this gives the required
  element. 
\end{proof}

\section{Conditions for contractibility of 
the fixed point sets}
    \label{section:Conditions}

In this section, we turn to $\Lcal_{n}$ for a general $n$ and
establish preliminary criteria for the action of a subgroup $H\subseteq
U(n)$ on $\Lcal_n$ to have a contractible fixed point set
(Proposition~\ref{proposition:B1} and Theorem~\ref{thm:ContractJ}).
We begin with some terminology and notation.
We often think of an object $\lambda$ in $\Lcal_{n}$ as given by the
equivalence classes of an equivalence relation $\sim_{\lambda}$ on
$\complexes^n\setminus\{0\}$, where $x \sim_\lambda y$ if $x$ and $y$
are in the same subspace of the partition $\lambda$. We therefore
denote the set of subspaces of the partition $\lambda$ by
$\class(\lambda):= \{ v_1, \ldots, v_m \}$.

\begin{definition}
\hfill
\begin{enumerate}
\item A partition $\lambda$ is \emph{weakly fixed} by
  $H$, or \emph{weakly} $H$-\emph{fixed} if $x\sim_{\lambda}y$ implies
  $hx\sim_{\lambda}hy$ for all $h\in H$.  That is, the action of $H$
  on $\complexes^{n}$ stabilizes $\class(\lambda)$ as a set, although
  $H$ may permute the elements of $\class(\lambda)$ nontrivially.  We
  denote the full subcategory of $\Lcal_{n}$ whose objects are weakly
  $H$-fixed partitions of $\complexes^{n}$ by $\weakfixed{H}$.
\item A partition $\lambda$ is \emph{strongly fixed} by $H$, or
  \emph{strongly} $H$-\emph{fixed}, if $x\sim_{\lambda}hx$ for all
  $x\in\complexes^{n}\backslash\{0\}$ and all $h\in H$. That is, the
  action of $H$ on $\class(\lambda)$ is trivial.   We
  denote the full subcategory of $\Lcal_{n}$ whose objects are strongly
  $H$-fixed partitions of $\complexes^{n}$ by $\strongfixed{H}$.
\item A strongly $H$-fixed partition $\lambda$ is called
  $H$-\emph{isotypic} if each element of $\class(\lambda)$ is
  an isotypic representation of $H$. We denote the full subcategory of
  $H$-isotypic partitions by $\Iso{H}$.
\end{enumerate}
\end{definition}

We observe that 
\[
\Iso{H}\ \subseteq\ \strongfixed{H}\ \subseteq\ \weakfixed{H},
\]
and that in general the containments are strict. We are interested in 
conditions under which $\weakfixed{H}$ is contractible.  

\begin{proposition}\label{proposition:B1}
  Let $H\subseteq U(n)$ be connected and polytypic. Then $\weakfixed{H}$
  is contractible.
\end{proposition}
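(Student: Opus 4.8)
The plan is to exhibit an explicit conical contraction of $\weakfixed{H}$ built from the isotypic decomposition of the standard representation. First I would use connectedness to simplify the problem: since $H$ is connected, any weakly $H$-fixed partition is automatically strongly $H$-fixed. Indeed, a weakly fixed $\lambda$ gives a continuous action of $H$ on the finite set $\class(\lambda)$, i.e.\ a homomorphism $H\to\Sigma_m$; as $H$ is connected and $\Sigma_m$ is discrete, this homomorphism is trivial, so every subspace $v\in\class(\lambda)$ is $H$-invariant. Next I would introduce the canonical partition $\lambda_{0}=\{w_{1},\dots,w_{r}\}$ given by the isotypic components of $\complexes^{n}$. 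Because $H$ is polytypic we have $r\ge 2$, so $\lambda_{0}$ is a proper partition, and each $w_{k}$ is $H$-invariant, so $\lambda_{0}\in\weakfixed{H}$.

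The contraction itself I would build from the meet (common refinement) with $\lambda_{0}$. Define $\Phi(\lambda)=\lambda\wedge\lambda_{0}$, the partition whose blocks are the nonzero subspaces $v\cap w_{k}$ for $v\in\class(\lambda)$ and $1\le k\le r$. Since each $v$ is $H$-invariant, it decomposes as $v=\bigoplus_{k}(v\cap w_{k})$, so these blocks are mutually orthogonal and sum to $\complexes^{n}$; each is $H$-invariant (an intersection of $H$-invariant subspaces) and is in fact isotypic, so $\Phi(\lambda)\in\Iso{H}\subseteq\weakfixed{H}$. Passing to common refinements with a fixed partition is order preserving, so $\Phi$ is an endofunctor of $\weakfixed{H}$. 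Since $\Phi(\lambda)$ refines both $\lambda$ and $\lambda_{0}$, there are canonical coarsening morphisms $\Phi(\lambda)\to\lambda$ and $\Phi(\lambda)\to\lambda_{0}$ in $\Lcal_{n}$, and these assemble into natural transformations $\Phi\Rightarrow\mathrm{id}$ and $\Phi\Rightarrow\mathrm{const}_{\lambda_{0}}$; naturality is automatic because there is at most one morphism between any two objects. On realizations of nerves these natural transformations induce homotopies $\mathrm{id}\simeq\Phi\simeq\mathrm{const}_{\lambda_{0}}$, whence $\weakfixed{H}$ is contractible.

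The hard part will be verifying that $\Phi$ genuinely takes values in $\Lcal_{n}$, i.e.\ that $\Phi(\lambda)$ is always \emph{proper}. This is exactly where the polytypic hypothesis is essential: every block of $\Phi(\lambda)$ satisfies $v\cap w_{k}\subseteq w_{k}\subsetneq\complexes^{n}$ (as $r\ge2$), so no single block can be all of $\complexes^{n}$, forcing at least two blocks. Two further points need care. First, the identification $v=\bigoplus_{k}(v\cap w_{k})$ for an $H$-invariant $v$ relies on the standard fact that the $H$-equivariant projections $e_{k}$ onto the isotypic components carry $v$ into itself, so that subrepresentations respect the isotypic decomposition. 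Second, because $\Lcal_{n}$ is a category internal to spaces, I must check that $\Phi$ is continuous; here $\dim(v\cap w_{k})$ is determined by the $H$-isomorphism type of $v$ and is therefore locally constant on $\weakfixed{H}$, so $v\mapsto v\cap w_{k}=e_{k}(v)$ is continuous and $\Phi$ is a continuous functor, which is what makes the natural transformations yield honest homotopies of fixed point spaces.
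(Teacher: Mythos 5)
Your proposal is correct and follows essentially the same route as the paper: connectedness reduces $\weakfixed{H}$ to $\strongfixed{H}$, and the refinement into $H$-isotypic components (your $\Phi$ is the paper's functor $\phi$) together with the polytypic hypothesis produces the contraction, the only cosmetic difference being that you contract via the single zigzag $\mathrm{id}\Leftarrow\Phi\Rightarrow\mathrm{const}_{\lambda_{0}}$ whereas the paper first deformation retracts onto $\Iso{H}$ and then uses $\lambda_{0}$ as a terminal object. Your attention to continuity of $\Phi$ is a point the paper leaves implicit, and your justification (local constancy of the multiplicities $\dim(v\cap w_{k})$) is sound.
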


\begin{remark*}
  Proposition~\ref{proposition:B1} is not actually used directly in
  the proof of Theorem~\ref{thm: main theorem}. We nonetheless include
  it as it may be of independent interest, and it provides an uncomplicated
  exemplar of our methods.
\end{remark*}

\begin{proof}
The action of $H$ on the elements of $\class(\lambda)$ for $\lambda \in
\left(\Lcal_n\right)^{H}$ defines a continuous map 
\[
H \to \Sigma_{\class(\lambda)} 
\]
from $H$ to the symmetric group on $\class(\lambda)$. Since $H$ is
connected and $\Sigma_{\class(\lambda)} $ is discrete, this map is trivial.
Thus any weakly $H$-fixed partition must be strongly $H$-fixed, and 
it is sufficient to prove that $\strongfixed{H}$ is contractible. 

A strongly $H$-fixed partition is a decomposition of $\complexes^n$
into representations of $H$. Each of these representations can in turn
be decomposed into its isotypic components as an $H$-representation,
which defines a functor $\phi: \strongfixed{H}\to \Iso{H}$. There is also 
a natural transformation of the composite 
\[
\strongfixed{H}\ \xrightarrow{\ \phi\ }
          \ \Iso{H}\ \hookrightarrow
          \ \strongfixed{H}
\]
to the identity functor on $\strongfixed{H}$, while the other
composition is actually equal to the identity. It follows that
$\strongfixed{H}$ is homotopy equivalent to $\Iso{H}$.  However, $H$
is polytypic, so the decomposition of $\complexes^n$ into isotypic
components is a proper partition of $\complexes^{n}$. This partition
is a terminal object of $\Iso{H}$, whence $\Iso{H}$ is contractible.
\end{proof}

Recall that any partition $\lambda$ in $\Lcal_n$ corresponds to an
equivalence relation on points of $\complexes^n\setminus\{0\}$, and 
$x \sim_\lambda y$ if $x$ and $y$ are in the same subspace of the
partition $\lambda$.  We now define another, coarser equivalence relation which
incorporates the group action. 

\begin{definition}\label{orbit_partition_def}
  Let $J \subseteq U(n)$ be a subgroup, and let $\lambda$ be an
  element of $\Lcal_n$ corresponding to the relation~$\sim_{\lambda}$.
  We define a new equivalence relation $\sim_{\lambdaglom{J}}$
  by $x \sim_{\lambdaglom{J}} y$ if
  there exists $j\in J$ such that $x \sim_{\lambda} j y$, and we
  denote the associated partition by $\lambdaglom{J}$.
\end{definition}

In other words, the partition $\lambdaglom{J}$ is the minimal coarsening of
$\lambda$ that is strongly fixed by $J$. 

\begin{lemma} \label{lemma:normalinH} 
  Let $J$ be a normal subgroup of $H$ and let
  $\lambda~\in\weakfixed{H}$.  Assume that $\lambdaglom{J}$ is a
  proper partition of $\complexes^n$.  Then
  $\lambdaglom{J}\in~\weakfixed{H}$.
\end{lemma}

\begin{proof}
  We want to show that $\lambdaglom{J}$ is weakly $H$-fixed. Let $h \in H$
  and suppose that $x\sim_{\lambdaglom{J}} y$. By definition of
  $\lambdaglom{J}$, there exists some $j \in J$ such that $x
  \sim_\lambda jy$.  Since $\lambda$ is fixed by $H$, we have 
  $hx\sim_\lambda hjy = \left(hjh^{-1}\right)hy$. 
  Since $J$ is normal in $H$, the element
  $hjh^{-1}$ is in $J$, so $hx \sim_{\lambdaglom{J}} hy$.
\end{proof}

We now bring these results together to give conditions under which
$\weakfixed{H}$ is contractible. 

\begin{theorem}\label{thm:ContractJ}
  Let $H\subseteq U(n)$, and let $J$ be a normal subgroup of $H$ such
  that for every $\lambda\in\weakfixed{H}$, the partition
  $(\lambda/J)$ is a proper partition of $\complexes^n$. If $J$
  is polytypic, then
\[
\weakfixed{H} \simeq \weakfixed{H}\cap\Iso{J}
\]
and  $\weakfixed{H}$ is contractible.
\end{theorem}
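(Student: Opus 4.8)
The plan is to adapt the strategy of Proposition~\ref{proposition:B1}, replacing the single isotypic-decomposition functor by the composite of two operations: first coarsen $\lambda$ by the action of $J$, then decompose the result into $J$-isotypic pieces. I will exhibit this composite as a functor $\weakfixed{H}\to\weakfixed{H}\cap\Iso{J}$ that is inverse, up to natural transformations, to the inclusion $\iota\colon\weakfixed{H}\cap\Iso{J}\hookrightarrow\weakfixed{H}$, and then show that the target category has a terminal object.

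First I would analyze the glomming functor $g\colon\lambda\mapsto\lambdaglom{J}$. Because $J$ is normal in $H$ and, by hypothesis, $\lambdaglom{J}$ is always proper, Lemma~\ref{lemma:normalinH} shows that $g$ carries $\weakfixed{H}$ into itself; moreover $g(\lambda)$ is strongly $J$-fixed by construction, and monotonicity of the minimal-coarsening operation makes $g$ a functor. As $\lambdaglom{J}$ is a coarsening of $\lambda$, the structure morphisms $\lambda\to\lambdaglom{J}$ assemble into a natural transformation $\mathrm{id}\Rightarrow g$. Next I would apply to $g(\lambda)$ the $J$-isotypic decomposition functor $\phi_J\colon\strongfixed{J}\to\Iso{J}$, built exactly as the functor $\phi$ in the proof of Proposition~\ref{proposition:B1} but with $J$ in place of $H$; refining each subspace into its isotypic pieces produces a natural transformation $\phi_J g\Rightarrow g$ whose components are the coarsenings $\phi_J(\lambdaglom{J})\to\lambdaglom{J}$. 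Writing $\Phi=\phi_J\circ g$, both natural transformations have target $g$, so on nerves $|\iota\Phi|\simeq|g|\simeq|\mathrm{id}|$. Conversely, an object of $\weakfixed{H}\cap\Iso{J}$ is already strongly $J$-fixed and $J$-isotypic, hence fixed by both $g$ and $\phi_J$, so $\Phi\iota=\mathrm{id}$ on the nose; thus $\iota$ is a homotopy equivalence and $\weakfixed{H}\simeq\weakfixed{H}\cap\Iso{J}$.

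The step I expect to be the main obstacle is checking that $\Phi(\lambda)=\phi_J(\lambdaglom{J})$ is genuinely weakly $H$-fixed, so that $\Phi$ lands in $\weakfixed{H}\cap\Iso{J}$ rather than merely in $\Iso{J}$. Although $g(\lambda)$ is weakly $H$-fixed, an element $h\in H$ only permutes its subspaces, and I must verify that $h$ also permutes their $J$-isotypic refinements. Normality is essential here: for $h\in H$ the conjugation $c_h\colon j\mapsto h^{-1}jh$ is an automorphism of $J$, and a short computation shows that $h$ carries the $\rho$-isotypic component of a subspace $w$ onto the $(\rho\circ c_h)$-isotypic component of $h\cdot w$. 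Twisting the $J$-action by an automorphism permutes the labelling of the isotypes but leaves the isotypic subspaces themselves unchanged, so $h$ sends the collection of $J$-isotypic pieces of $g(\lambda)$ bijectively to itself. This is precisely the compatibility needed, and it is the only point where the argument requires more than the formal structure already present in Proposition~\ref{proposition:B1}.

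Finally, for contractibility I would exhibit a terminal object in $\weakfixed{H}\cap\Iso{J}$, namely the canonical $J$-isotypic decomposition $\Xi$ of $\complexes^{n}$. Since $J$ is polytypic, $\Xi$ has more than one component and so is a proper partition; the same twisting argument shows that $H$ permutes the global $J$-isotypic components, whence $\Xi\in\weakfixed{H}$, while $\Xi\in\Iso{J}$ by definition. Every $J$-isotypic partition has each of its subspaces contained in a single $J$-isotypic component of $\complexes^{n}$, so $\Xi$ is a coarsening of every object of $\weakfixed{H}\cap\Iso{J}$; because $\Lcal_{n}$ has at most one morphism between any two objects, $\Xi$ is terminal. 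A category with a terminal object has contractible nerve, so $\weakfixed{H}\cap\Iso{J}$, and therefore $\weakfixed{H}$, is contractible. Note that polytypy of $J$ enters only to guarantee that $\Xi$ is proper; the homotopy equivalence itself holds without that hypothesis.
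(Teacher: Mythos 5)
Your proposal is correct and follows essentially the same route as the paper: coarsen by $J$ via $\lambda\mapsto\lambdaglom{J}$, refine into $J$-isotypic components, verify weak $H$-fixedness of the refinement via the conjugation automorphism $j\mapsto h^{-1}jh$, and conclude with the terminal object given by the global $J$-isotypic decomposition. The only cosmetic difference is that you compose the two retractions into a single functor $\Phi$ and use a zigzag of natural transformations, whereas the paper records the two deformation retractions $\weakfixed{H}\simeq\weakfixed{H}\cap\strongfixed{J}\simeq\weakfixed{H}\cap\Iso{J}$ separately.
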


\begin{proof}
  Under our assumptions and by Lemma~\ref{lemma:normalinH}, the
  assignment $\lambda \mapsto \lambdaglom{J} $ defines a functor
\[
\weakfixed{H} \to\weakfixed{H}\cap\strongfixed{J}.
\]
Since $\lambdaglom{J} $ is a coarsening of $\lambda$, there is a natural
transformation from the identity functor on $\weakfixed{H}$ to the composite
\[
\weakfixed{H} \to\weakfixed{H}\cap\strongfixed{J}
          \hookrightarrow  \weakfixed{H},
\]
showing that the induced composite map on classifying spaces is
homotopic to the identity. Since for $\lambda$ in
$\weakfixed{H}\cap\strongfixed{J}$, we know $\lambdaglom{J} =
\lambda$, it follows that the map on classifying spaces induced by the
functor $\lambda \mapsto \lambdaglom{J}$ is a deformation retraction,
giving a homotopy equivalence
\begin{equation}    \label{eq: second inclusion}
\weakfixed{H}\simeq\weakfixed{H}\cap\strongfixed{J} .
\end{equation}

Next, we show that
$\weakfixed{H}\cap\Iso{J}\hookrightarrow\weakfixed{H}\cap\strongfixed{J}$
is a homotopy equivalence.  Suppose that $\lambda \in
\weakfixed{H}\cap\strongfixed{J}$, and construct a new partition
$\tilde\lambda$ by taking each $v\in \class(\lambda)$, and refining it
into its $J$-isotypic components. We claim that $\tilde\lambda$ is
weakly fixed by $H$. Indeed, suppose the refinement $\tilde \lambda $
of $\lambda$ has $\class(\tilde\lambda)=\{ v_{i} \}_{i \in \Ical}$
with $\oplus_k v_{i_k} = v_i$ and all of the $v_{i_k}$ are irreducible
and isomorphic as representations of $J$.  Let
$h\in H$ and $j\in J$ be arbitrary, and let $x$ be an element of
$v_{i_k}$.  Then
\[ 
jhx = hh^{-1}jhx = hj^\prime x,
\]
for some $j^\prime $ in $J$. But $j^\prime x$ is an element of $v_{i_k}$.
Thus we conclude that $hv_{i_k}$ is a representation of $J$. 

Since $v_{i_k}$ is a $J$-representation, there is a corresponding map
$\rho_{i_k} \colon J \to \GL(v_{i_k})$ from $J$ to the linear automorphisms
of $v_{i_k}$. Define $\rho^h_{i_k} \colon J \to \GL(v_{i_k})$ by
$\rho^h_{i_k}(j) = \rho_{i_k} (h^{-1} j h)$. Since $$j hx = h (h^{-1}
j h)x= h \rho^h_{i_k}(j) x,$$ the map $x \mapsto hx$ defines an
isomorphism from the representation determined by $\rho^h_{i_k}$ to $h
v_{i_k}$.  Since $\rho_{i_k}$ is irreducible, so is $\rho^h_{i_k}$.
Thus $h v_{i_k}$ is irreducible. Moreover, if $v_{i_k}$ is isomorphic
to $v_{i_{k'}}$, then the representations corresponding to
$\rho^h_{i_k}$ and $\rho^h_{i_{k'}}$ are isomorphic as well, whence $h
v_{i_k}$ is isomorphic to $h v_{i_{k'}}$. It follows that $h v_i =
\oplus_k h v_{i_k}$ is $J$-isotypic.

Since $h$ fixes $\lambda$, we have that $h$ permutes the elements of
$\class(\lambda)$. Let $w \in \class(\lambda)$, and let $hw \in
\class(\lambda)$ denote its image. The previous paragraph shows that $h$
maps the $J$-isotypic components of $w$ to the $J$-isotypic components of
$hw$. It follows that $H$ fixes the refinement of $\lambda$ into its
$J$-isotypic components as claimed.

The original partition $\lambda$ is a coarsening of $\tilde\lambda$,
so there is a natural transformation from the composite
\[
\weakfixed{H}\cap\strongfixed{J} 
          \rightarrow \weakfixed{H}\cap\Iso{J}
          \hookrightarrow \weakfixed{H}\cap\strongfixed{J}
\]
to the identity functor on $\weakfixed{H}\cap\strongfixed{J}$. If
$\lambda$ is in $\weakfixed{H}\cap\Iso{J}$, then $\tilde\lambda=\lambda$,
so the map on classifying spaces induced by the functor 
$\lambda\mapsto\tilde\lambda$ is a deformation retraction, giving a 
homotopy equivalence
\begin{equation}  \label{eq: first inclusion}
\weakfixed{H}\cap\strongfixed{J}\simeq \weakfixed{H}\cap\Iso{J}.
\end{equation}
The homotopy equivalences in equations~\eqref{eq: second inclusion}
and~\eqref{eq: first inclusion}, taken together, establish the weak 
equivalence required in the theorem. 

It remains to show that $\weakfixed{H}\cap\Iso{J}$ is contractible.
Since we are assuming that $J$ is polytypic, the decomposition $\mu$
of $\complexes^n$ into the isotypic components of $J$ is a proper
partition and is terminal in $\Iso{J}$. The partition $\mu$ is also in
$\weakfixed{H}$, by the same argument we used above to prove
that the refinement of any $\lambda$ into its $J$-isotypic components
was weakly fixed by $H$.

Thus $\mu$ is a terminal element in
$\weakfixed{H}\cap\Iso{J}$
showing that this category has contractible classifying space, as desired.
\end{proof}

\section{Proof of the main theorem}
\label{section:MainProof}

In this section, our goal is to use the results of the previous sections
to prove Theorem~\ref{thm: main theorem}. Let $H$ be a $p$-toral subgroup of 
$U(n)$ and let $\Hbar$ be its image in $PU(n)$. Our plan is to pick out
an appropriate element of order $p$ in $\Hbar$, lift it to $U(n)$ using 
Lemma~\ref{lemma:B3}, and then use Theorem~\ref{thm:ContractJ}.

\begin{lemma} \label{lemma:B2part:nontransitive}
Let $H\subseteq U(n)$ be a subgroup with $\Hbar$ its
  projection to $PU(n)$, and suppose there exists a subgroup 
$V\cong \integers/p$ of $\Hbar$ such that
\[
V \subseteq \ker\left(\Hbar \rightarrow \Hbar/p \right).
\]
Then $V$ does not act transitively on $\class(\lambda)$
for any weakly $H$-fixed partition $\lambda$ of $\complexes^n$.
\end{lemma}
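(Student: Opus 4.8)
The plan is to argue by contradiction, extracting from a hypothetical transitive action a surjection from $\Hbar$ onto an elementary abelian $p$-group that is nontrivial on $V$; this will contradict the hypothesis $V\subseteq\ker(\Hbar\to\Hbar/p)$ together with the universal property that $\Hbar\to\Hbar/p$ is initial among homomorphisms from $\Hbar$ to elementary abelian $p$-groups.

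First I would record the observation that makes the statement meaningful. The central circle $S^1\subseteq U(n)$ acts on $\complexes^n$ by scalars, and scalar multiplication preserves every linear subspace; hence $S^1$ fixes $\class(\lambda)$ pointwise for every partition $\lambda$. Consequently the permutation action of $H$ on $\class(\lambda)$ factors through $\Hbar=H/(S^1\cap H)$, so it genuinely makes sense for $V\subseteq\Hbar$ to act on $\class(\lambda)$. Now suppose toward a contradiction that $V$ acts transitively on $\class(\lambda)=\{v_1,\dots,v_m\}$. Since $\lambda$ is proper we have $m>1$, and since $V\cong\integers/p$ has prime order, the orbit--stabilizer theorem forces $m=p$ with $V$ acting as a single $p$-cycle; in particular the restriction of the permutation homomorphism to $V$ is injective.

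Next I would analyze the full permutation homomorphism $\psi\colon\Hbar\to\Sigma_{\class(\lambda)}\cong\Sigma_p$. This map is continuous with finite image, so its kernel is closed and $\psi(\Hbar)\cong\Hbar/\ker\psi$ is a finite quotient of the $p$-toral group $\Hbar$ (recall $\Hbar$ is $p$-toral because $H$ is, by Lemma~\ref{lemma:HbarpToral}); being a finite $p$-toral group, $\psi(\Hbar)$ is a finite $p$-group. But the Sylow $p$-subgroups of $\Sigma_p$ have order exactly $p$ and are generated by $p$-cycles, so any $p$-subgroup of $\Sigma_p$ has order dividing $p$. Since $\psi(\Hbar)$ contains the order-$p$ group $\psi(V)$, we conclude $\psi(\Hbar)=\psi(V)\cong\integers/p$. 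This is the leverage I want: $\psi$ is now a homomorphism from $\Hbar$ onto the elementary abelian $p$-group $\integers/p$ that is nontrivial on $V$. By the universal property of $\Hbar\to\Hbar/p$, the map $\psi$ factors through $\Hbar/p$; since $V\subseteq\ker(\Hbar\to\Hbar/p)$, this forces $\psi(V)$ to be trivial, contradicting the injectivity of $\psi|_V$.

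I expect the only genuine obstacle to be the middle step, namely pinning down that $\psi(\Hbar)$ is literally $\integers/p$ rather than a larger subgroup of $\Sigma_p$. This is not a formality: were the image allowed to be all of $\Sigma_p$, the maximal elementary abelian $p$-quotient would be trivial for odd $p$ and the conclusion would fail, so the $p$-torality of $\Hbar$ is essential here. The step rests on two clean inputs that I would invoke explicitly---that finite quotients of the $p$-toral group $\Hbar$ are $p$-groups (Lemma~\ref{lemma:HbarpToral}), and that the only $p$-subgroups of $\Sigma_p$ are cyclic of order $p$. Everything else is bookkeeping.
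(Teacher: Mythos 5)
Your argument is correct and follows essentially the same route as the paper's proof: assume transitivity, deduce $|\class(\lambda)|=p$, use the $p$-torality of $\Hbar$ to see that the image of $\Hbar\to\Sigma_p$ is a $p$-group and hence equals the copy of $\integers/p$ generated by the image of $V$, then derive a contradiction by factoring through $\Hbar\to\Hbar/p$. The only (harmless) additions are your preliminary remark that the permutation action factors through $\Hbar$ and your explicit appeal to the universal property of the maximal elementary abelian quotient, both of which the paper leaves implicit.
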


\begin{proof}
  Let $\lambda$ be an object of $\weakfixed{H}$. Since $V \subseteq
  \Hbar$, it follows that $V$ permutes the elements of $\class(\lambda)$.
  We claim this permutation is not transitive. Assume the contrary. As
  $\lambda$ is a proper partition, it follows $|\class(\lambda)|>1$, so
  $|\class(\lambda)|=p$. We choose a bijection
  $\class(\lambda)\cong \{1,2,\dots,p \}$ such that the image of $V$
  under the resulting map $\Hbar\to \Sigma_p$ is generated by the
  $p$-cycle $(1,2,\dots,p)$. 
  
  As $\Hbar$ is $p$-toral, its image under the map $\Hbar \to
  \Sigma_p$ is a $p$-group. By the assumption that $V$ permutes the
  elements of $\class(\lambda)$ transitively, the image of $\Hbar$
  must contain the subgroup $\integers/p\subseteq\Sigma_{p}$ generated
  by $(1,2,\dots,p)$. This subgroup is a maximal $p$-subgroup of
  $\Sigma_{p}$, which forces the image of $\Hbar$ to be contained in
  $\integers/p$.  Therefore $\Hbar \to \Sigma_p$ factors through
  $\Hbar \to \Hbar/p$.  Because the restriction to $V $ is nontrivial,
  the existence of this factorization contradicts the assumption that
  $V\subseteq \ker\left(\Hbar \rightarrow \Hbar/p \right)$. We
  conclude that $V$ cannot act transitively on $\class(\lambda)$, thus
  proving the lemma.
\end{proof}

\begin{corollary} \label{corollary:B4} 
  Let $H$, $\Hbar$, and $V$ be as in Lemma~\ref{lemma:B2part:nontransitive}, and suppose further that $V$ is normal in $\Hbar$. 
  Then $\weakfixed{H}$ is contractible.
\end{corollary}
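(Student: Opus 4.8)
The plan is to apply Theorem~\ref{thm:ContractJ}, using $V$ to manufacture a polytypic normal subgroup of a mild enlargement of $H$. The key first observation is that the central circle $S^1\subseteq U(n)$ acts trivially on $\Lcal_n$, since a scalar preserves every subspace of $\complexes^n$ and hence strongly fixes every partition. Writing $q\colon U(n)\to PU(n)$ for the quotient map and setting $H':=q^{-1}(\Hbar)=S^1 H$, it follows that a partition is weakly $H$-fixed if and only if it is weakly $H'$-fixed, so $\weakfixed{H}=\weakfixed{H'}$. It therefore suffices to show that $\weakfixed{H'}$ is contractible; the advantage of enlarging $H$ to $H'$ is that $H'$ contains $S^1$, and hence contains the full $q$-preimage of any subgroup of $\Hbar$.

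Next I would set $J:=q^{-1}(V)$. By Lemma~\ref{lemma:B3}, $J\cong S^1\times\integers/p$ and is polytypic. Since $V$ is normal in $\Hbar$, its preimage $J$ is normal in $H'=q^{-1}(\Hbar)$; this is the one place where the extra hypothesis $V\triangleleft\Hbar$ enters. Thus $J$ is a polytypic normal subgroup of $H'$, and the only remaining hypothesis of Theorem~\ref{thm:ContractJ} to verify is that $\lambdaglom{J}$ is a proper partition of $\complexes^n$ for every $\lambda\in\weakfixed{H'}=\weakfixed{H}$.

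To check properness I would use the description of $\lambdaglom{J}$ as the minimal coarsening of $\lambda$ strongly fixed by $J$ (the remark following Definition~\ref{orbit_partition_def}): its subspaces are the direct sums $\bigoplus_{w\in O}w$, where $O$ ranges over the orbits of the action of $J$ on $\class(\lambda)$. Because $S^1\subseteq J$ fixes each subspace of $\lambda$, this action factors through $V=J/S^1$, so the $J$-orbits and $V$-orbits on $\class(\lambda)$ agree. Lemma~\ref{lemma:B2part:nontransitive} tells us that $V$ does not act transitively on $\class(\lambda)$, so there are at least two orbits, and hence $\lambdaglom{J}$ consists of at least two subspaces and is proper. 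Theorem~\ref{thm:ContractJ} then gives that $\weakfixed{H'}$ is contractible, whence $\weakfixed{H}=\weakfixed{H'}$ is contractible as well.

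Since this argument merely assembles the earlier results, there is no single hard computation; the step demanding the most care is the bookkeeping between $U(n)$ and $PU(n)$. One must see that replacing $H$ by $H'=S^1 H$ changes nothing about the fixed points (because $S^1$ acts trivially on $\Lcal_n$) while being exactly what is needed for the polytypic group $J=q^{-1}(V)$ of Lemma~\ref{lemma:B3} to sit inside the ambient group as a normal subgroup, and one must convert the non-transitivity output of Lemma~\ref{lemma:B2part:nontransitive} into the properness of $\lambdaglom{J}$ that Theorem~\ref{thm:ContractJ} takes as input.
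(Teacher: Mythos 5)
Your argument is correct and follows essentially the same route as the paper: take $J$ to be a preimage of $V$, get polytypicity from Lemma~\ref{lemma:B3}, deduce properness of $\lambdaglom{J}$ from Lemma~\ref{lemma:B2part:nontransitive} because the $J$-action on $\class(\lambda)$ factors through $V$, and then apply Theorem~\ref{thm:ContractJ}. The only difference is bookkeeping: the paper takes $J$ to be the preimage of $V$ inside $H$ itself, whereas you enlarge $H$ to $S^1H$ so that the full preimage $q^{-1}(V)$ from Lemma~\ref{lemma:B3} is literally a normal subgroup of the ambient group; both versions work, since the central $S^1$ acts trivially on $\Lcal_n$.
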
 

\begin{proof}
	Let $J$ be the inverse image of $V$ in $H$; by Lemma~\ref{lemma:B3}, we know that $J$ is
  polytypic.  By Lemma~\ref{lemma:B2part:nontransitive}, $V$ does not
  act transitively on $\class(\lambda)$ for any weakly $H$-fixed
  partition~$\lambda$, and since the action of $J$ on $\Lcal_{n}$
  factors through $V$, we conclude that $J$ has the same property. 
  Since $V$ is normal in $\Hbar$, its inverse image $J$ is normal in
  $H$, so it
  follows that $J$ and $H$ satisfy the conditions of
  Theorem~\ref{thm:ContractJ}. Hence $\weakfixed{H}$ is contractible.
\end{proof}

We now have all the ingredients needed for the proof of our main theorem. 

\begin{maintheorem}
\maintheoremtext
\end{maintheorem}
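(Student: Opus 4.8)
The plan is to prove the contrapositive: assuming $H$ is $p$-toral but \emph{not} projective elementary abelian, I will show that $\weakfixed{H}$ is contractible. By the definition recalled in the introduction, the hypothesis says precisely that the image $\Hbar$ of $H$ in $PU(n)$ fails to be elementary abelian. Since the trivial group is (vacuously) elementary abelian, this forces $\Hbar$ to be nontrivial, so the hypotheses of Lemma~\ref{lemma: element of order p} are met and I may invoke it.

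First I would apply Lemma~\ref{lemma: element of order p} to extract a central element $\Abar \in \Hbar$ of order $p$ that lies in the kernel of $\Hbar \to \Hbar/p$. Setting $V = \langle \Abar \rangle \cong \integers/p$, the element $\Abar$ being central makes $V$ normal in $\Hbar$, and by construction $V \subseteq \ker(\Hbar \to \Hbar/p)$. Thus the triple $H$, $\Hbar$, $V$ satisfies the hypotheses of Corollary~\ref{corollary:B4}, and that corollary delivers at once that $\weakfixed{H}$ is contractible. This contradicts the standing assumption that $\weakfixed{H}$ is not contractible, so $\Hbar$ must be elementary abelian after all; that is exactly the assertion that $H$ is projective elementary abelian.

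At the level of the final argument there is no serious obstacle left to overcome, because all the substantive work has been front-loaded into the earlier results. If one traces where the difficulty truly resides, it is in the contraction machinery of Theorem~\ref{thm:ContractJ}: given a polytypic normal subgroup $J$ of $H$ whose orbit-coarsenings $\lambdaglom{J}$ remain proper on every weakly $H$-fixed partition, that theorem manufactures the retracting functors $\lambda \mapsto \lambdaglom{J}$ and $\lambda \mapsto \tilde\lambda$ and pins down a terminal isotypic object. Corollary~\ref{corollary:B4} then packages this, using Lemma~\ref{lemma:B3} to see that the preimage $J$ of $V$ in $H$ is polytypic and Lemma~\ref{lemma:B2part:nontransitive} to see that $V$ can never act transitively on $\class(\lambda)$, which is what keeps $\lambdaglom{J}$ proper.

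Consequently, the only thing the proof of the main theorem itself must supply is a \emph{single} element of order $p$ in $\Hbar$ that is simultaneously central (so that the $\integers/p$ it generates is normal) and contained in the kernel of reduction mod $p$; Lemma~\ref{lemma: element of order p} produces exactly such an element, handling the connected, disconnected-with-torus, and finite cases uniformly. The one point meriting a moment's care is the edge case noted above, namely checking that ``$\Hbar$ is not elementary abelian'' already forces $\Hbar \neq 1$, since otherwise Lemma~\ref{lemma: element of order p} would not apply.
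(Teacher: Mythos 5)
Your proof is correct and follows essentially the same route as the paper's own: both argue by contraposition, invoke Lemma~\ref{lemma: element of order p} to produce a central order-$p$ element of $\Hbar$ lying in the kernel of $\Hbar\to\Hbar/p$, and then conclude via Corollary~\ref{corollary:B4} applied to the $\integers/p$ it generates. The edge case you flag---that ``not elementary abelian'' forces $\Hbar$ to be nontrivial so that Lemma~\ref{lemma: element of order p} applies---is precisely the observation the paper also makes.
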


\begin{proof}
  Let $H$ be a $p$-toral subgroup of $U(n)$.  Assume that $\Hbar$ is
  not a finite, elementary abelian $p$-group; then we want to prove
  that $\weakfixed{H}$ is contractible. Since $\Hbar$ is necessarily
  nontrivial, by Lemma~\ref{lemma: element of order p} there exists
\[
  V\cong \integers/p 
   \subseteq Z(\Hbar) \cap \ker\left(\Hbar \rightarrow \Hbar/p \right).
\]
 Thus we have
  $V\triangleleft\Hbar$ that satisfies the hypotheses of
  Lemma~\ref{lemma:B2part:nontransitive} and Corollary~\ref{corollary:B4}, and the theorem follows.
\end{proof}

\section{Examples}
\label{section: examples of fixed points} 

In this section we consider $p=2$ and compute two examples of fixed
point sets $\weakfixed{H}$ where $H$ is a projective elementary
abelian $p$-group. We first consider an example
$H\cong\integers/2\subseteq U(2)$ acting on $\Lcal_{2}$, and the fixed
points $\weakfixed{H}$ turn out not to be contractible
(Proposition~\ref{prop: non-contractible}).  This example shows that
Theorem~\ref{thm: main theorem} is group-theoretically sharp: there
exist projective elementary abelian $p$-groups with noncontractible
fixed point sets.  The second example is $H'\cong\integers/2\subseteq
U(3)$ acting on $\Lcal_{3}$. In this case the fixed point set turns
out to be contractible 
(Proposition~\ref{proposition: L3 fixed contractible}), illustrating
that not all projective elementary abelian $p$-subgroups have
noncontractible fixed point sets.  In~\cite{Banff-follow-on}, we will
establish further restrictions of a representation-theoretic nature
that narrow down the $p$-toral subgroups of $U(n)$ that can have
noncontractible fixed point sets on $\Lcal_{n}$.\\

First we compute fixed points on $\Lcal_{2}$. 
Let $H\cong\integers/2\subseteq U(2)$ be the subgroup 
generated by $\tau\in U(2)$ represented by the matrix
$\begin{bmatrix}
  0 & 1\\
  1 & 0
\end{bmatrix}$.\\

\begin{proposition} \label{prop: non-contractible} 
  The fixed point space $\left(\Lcal_2\right)^{H}$ is homeomorphic
  to the space $S^1 \sqcup \ast$.
\end{proposition}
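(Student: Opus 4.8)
The plan is to work entirely within the explicit model of $\Lcal_2$ furnished by Proposition~\ref{prop: RP2}: namely, $\Lcal_2$ is the quotient of $\complexes P^1\cong\complexes\cup\{\infty\}$ by the involution $\sigma\colon z\mapsto -1/\zbar$, where the parameter $z$ records the partition $\{\ell_z,\ell_z^\perp\}$ with $\ell_z=\Span(1,z)$. Since $\Lcal_2$ has only identity morphisms, $\left(\Lcal_2\right)^{H}$ is just the subspace of weakly $H$-fixed partitions, so the entire task is to determine which partitions $\lambda=\{\ell,\ell^\perp\}$ satisfy $\tau\cdot\class(\lambda)=\class(\lambda)$ and then to read off the subspace topology. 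Because $\tau$ is unitary we have $\tau\ell^\perp=(\tau\ell)^\perp$, so each fixed partition falls into exactly one of two cases: Case~1, in which $\tau$ fixes each line ($\tau\ell=\ell$), and Case~2, in which $\tau$ interchanges them ($\tau\ell=\ell^\perp$).

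First I would dispose of Case~1. Here both $\ell$ and $\ell^\perp$ are eigenlines of $\tau$, and since $\tau$ has the two distinct eigenvalues $\pm1$ with respective eigenlines $\Span(1,1)$ and $\Span(1,-1)$---which are mutually orthogonal---there is exactly one partition of this type. It is the point I will call $\ast$, corresponding to the parameter $z=1$. Next I would treat Case~2 by direct computation. Writing $\ell=\Span(1,z)$, the coordinate swap gives $\tau\ell=\Span(1,1/z)$, while $\ell^\perp$ has parameter $-1/\zbar$, so the condition $\tau\ell=\ell^\perp$ reduces to $1/z=-1/\zbar$, i.e.\ $z+\zbar=0$. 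Thus $z$ is purely imaginary, and together with the boundary cases $z=0,\infty$ the solution locus is the circle $C=i\reals\cup\{\infty\}$ in $\complexes P^1$.

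I would then show that the image of $C$ in $\Lcal_2$ is a circle. Restricted to $C$ the involution $\sigma$ becomes, in the real parameter $t$ (writing $z=it$), the map $t\mapsto -1/t$, which is a fixed-point-free involution. In fact, under the identification of $\sigma$ with the antipodal map of $S^2=\complexes P^1$ from Proposition~\ref{prop: RP2}, $C$ is a great circle and $\sigma|_C$ is its antipodal map, so $C/\sigma\cong S^1$; since $C$ is compact and $\Lcal_2$ is Hausdorff, this circle embeds as a closed subspace. Finally I would assemble the global picture: because $z=1$ is not purely imaginary, a small disk about $z=1$ (together with its $\sigma$-partner about $z=-1$) meets the fixed locus in no other point, so $\ast$ is isolated and disjoint from the circle. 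This yields $\left(\Lcal_2\right)^{H}\cong S^1\sqcup\ast$.

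I expect the genuine obstacle to be the topological bookkeeping at the quotient stage rather than any algebra: one must verify that $\sigma$ acts \emph{freely} on $C$, so that Case~2 contributes an honest circle and not an arc, and separately that the Case~1 point persists as an isolated component disjoint from that circle. Both become transparent once $\sigma$ is recognized as the antipodal map and $C$ as a great circle, exactly as in the proof of Proposition~\ref{prop: RP2}.
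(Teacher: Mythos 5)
Your proof is correct and follows essentially the same route as the paper: the same case analysis in the model of Proposition~\ref{prop: RP2} (both lines fixed by $\tau$ versus the two lines interchanged), producing the single isolated point $\{\Span(1,1),\Span(1,-1)\}$ and the circle coming from the purely imaginary parameters. The only cosmetic difference is that the paper passes to the closed-disk fundamental domain and realizes the circle as the diameter from $-i$ to $i$ with endpoints identified, whereas you quotient the great circle $i\reals\cup\{\infty\}$ of $\complexes P^1$ by the antipodal involution directly.
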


To prove Proposition~\ref{prop: non-contractible}, we set up a little
notation.  Let $L_{z}$ denote the line in $\complexes^{2}$ spanned by
$(1,z)$, and let $L_{\infty}$ denote the line spanned by $(0,1)$.  We
saw in the proof of Proposition~\ref{prop: RP2} that the set of
objects in $\Lcal_{2}$ consists of pairs $\{L_{z}, L_{-1/\zbar}\}$
where $z\in\complexes\backslash\{0\}$, together with one extra point
$\{L_{0}, L_{\infty}\}$. Since $\tau\in U(2)$ exchanges the standard
basis vectors of $\complexes^{2}$, if $z\in\complexes\backslash\{0\}$
then $\tau\left(L_{z}\right)=L_{1/z}$, and $\tau$ exchanges $L_{0}$
and $L_{\infty}$.

\begin{lemma}  \label{lemma: fixed points as set}
As a set, the fixed points of the action of $\tau$ on $\Lcal_{2}$ consist of
the point $\{L_{1}, L_{-1}\}$, the point $\{L_{0}, L_{\infty}\}$,
and the set of points
$\{L_{ir}, L_{-i/r}\}$ where $r\in\reals\backslash\{0\}$.
\end{lemma}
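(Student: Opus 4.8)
The plan is to exploit the explicit parametrization of the objects of $\Lcal_2$ established in the proof of Proposition~\ref{prop: RP2}: every object is an unordered pair $\{L_z, L_{-1/\zbar}\}$ consisting of a line and its orthogonal complement (with $z\in\complexes\backslash\{0\}$), together with the single extra object $\{L_0, L_\infty\}$. Since $\tau$ is unitary, it commutes with the operation of taking orthogonal complements, so it is enough to understand how $\tau$ permutes the lines themselves and then read off the induced action on unordered pairs. First I would record the action on lines: $\tau$ sends the spanning vector $(1,z)$ to $(z,1)$, hence $\tau(L_z)=L_{1/z}$ for $z\neq 0$, while $\tau$ interchanges $L_0$ and $L_\infty$; this is essentially the computation already stated in the text preceding the lemma.

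The key reduction is that an object $\{L_z, L_{-1/\zbar}\}$ is $\tau$-fixed precisely when $\tau$ carries this unordered pair to itself. Because a line can never equal its own orthogonal complement, there are exactly two mutually exclusive and exhaustive possibilities: either $\tau$ fixes each of the two lines individually (the \emph{stay} case) or $\tau$ interchanges them (the \emph{swap} case). The next step is to translate each case into an equation in $z$ and solve it, treating the degenerate object $\{L_0,L_\infty\}$ by hand rather than forcing it into the $z$-parametrization.

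In the stay case, $\tau(L_z)=L_z$ forces $1/z=z$, so $z=\pm 1$, and both values yield the single partition $\{L_1, L_{-1}\}$. In the swap case, $\tau(L_z)=L_{-1/\zbar}$ forces $1/z=-1/\zbar$, i.e.\ $\zbar=-z$, so $z$ is purely imaginary, $z=ir$ with $r\in\reals\backslash\{0\}$; a short computation of the orthogonal complement then identifies the resulting partition as $\{L_{ir}, L_{-i/r}\}$. Finally, the special object $\{L_0,L_\infty\}$ is visibly preserved, since $\tau$ swaps $L_0$ and $L_\infty$. Collecting these three families produces exactly the set asserted in the lemma.

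The only real care needed is bookkeeping: keeping the unordered-pair structure straight, checking that the stay and swap cases are genuinely disjoint (a line is never its own perpendicular) and together exhaust all fixed objects, and handling $\{L_0,L_\infty\}$ separately. There is no substantive obstacle, but I would flag for later use that the parametrization $r\mapsto\{L_{ir}, L_{-i/r}\}$ is two-to-one, since $r$ and $-1/r$ determine the same partition; this redundancy is harmless for the present set-level statement but will be relevant when the topology of $\weakfixed{H}$ is determined.
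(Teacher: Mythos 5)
Your proof is correct and follows essentially the same route as the paper's: parametrize the objects as unordered pairs $\{L_{z},L_{-1/\zbar}\}$ plus the special point $\{L_{0},L_{\infty}\}$, split the fixed objects into the case where each line is fixed (giving $z=\pm 1$) and the case where the lines are swapped (giving $z=ir$), and check the special point by hand. Your closing remark that $r\mapsto\{L_{ir},L_{-i/r}\}$ is two-to-one is a useful observation that the paper defers to the topological identification in Proposition~\ref{prop: non-contractible}, but it does not change the argument here.
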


\begin{proof}
Direct computation establishes that the points of $\Lcal_{2}$
in the statement of the lemma are in fact fixed by $\tau$.

Points in $\Lcal_{2}$ besides $\{L_{0}, L_{\infty}\}$ have the form
$\{L_{z}, L_{-1/\zbar}\}$ where $z\in\complexes\backslash\{0\}$. If
such a point is fixed by $\tau$, then either each line in the pair is
fixed by $\tau$ (the partition is strongly fixed), or else the lines
are interchanged by $\tau$ (the partition is only weakly fixed). In the first
case, since $\tau\left(L_{z}\right)=L_{1/z}$, we must have $z=1/z$, so
$z=\pm 1$, corresponding to the point $\{L_{1}, L_{-1}\}$. In the
second case, we must have $1/z=-1/\zbar$, meaning $\zbar=-z$, so $z$
is purely imaginary, say $z=ir$ for $r\in\reals\backslash\{0\}$.
Thus $\{L_{z}, L_{-1/\zbar}\}$ has the form $\{L_{ir}, L_{-i/r}\}$.
\end{proof}

\begin{proof}[Proof of Proposition~\ref{prop: non-contractible}]
 To determine the fixed point set of the action of
  $\tau$ on $\Lcal_{2}$ as a topological space, and not just as a set
  (as in Lemma~\ref{lemma: fixed points as set}), we recall from the
  proof of Proposition~\ref{prop: RP2} that $\Lcal_{2}$ can be
  identified as the quotient space of the disk $\|z\|\leq 1$ in
  $\complexes^{1}$ by the antipodal action on the boundary circle
  $\|z\|=1$.  According to Lemma~\ref{lemma: fixed points as set}, the
  fixed points of $\tau$ correspond to the points of the unit disk
  that lie on the imaginary axis, $\{ir \mid r\in
  [-1,1]\subseteq\reals\}$, together with the real points $1$ and
  $-1$. The points $1$ and $-1$ are identified by passing to
  $\Lcal_{2}$, as are the points $i$ and $-i$, which gives
  $S^{1}\sqcup \ast$ as the fixed point space of $\tau$ acting on
  $\Lcal_{2}$.
\end{proof}
 
Proposition~\ref{prop: non-contractible} gives an example of a
$\integers/2\subseteq U(2)$ that acts with noncontractible fixed point
set on $\Lcal_{2}$.  However, when we take the same subgroup and embed
it in $U(3)$, we get a different result. Let
$H'\cong\integers/2\subseteq U(3)$ be the subgroup generated by 
\[\tau'=
\begin{bmatrix} 0 & 1 & 0\\
    1 & 0 & 0\\
    0 & 0 & 1
\end{bmatrix}\in U(3). 
\]
\\

\begin{proposition}   \label{proposition: L3 fixed contractible}
The fixed point space $\weakfixedspecific{H'}{3}$
is contractible.
\end{proposition}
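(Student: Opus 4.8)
The plan is to apply Theorem~\ref{thm:ContractJ} directly with $H=H'$ and $J=H'$. First I would record the representation-theoretic input: the involution $\tau'$ swaps $e_1\leftrightarrow e_2$ and fixes $e_3$, so as an $H'$-representation $\complexes^3$ splits into the $(+1)$-eigenspace $\langle e_1+e_2,\,e_3\rangle$, on which $H'$ acts trivially, and the $(-1)$-eigenspace $\langle e_1-e_2\rangle$, on which $H'$ acts by the sign representation. These are two distinct isotypic components, so $H'$ is polytypic; and $J=H'$ is (vacuously) normal in $H=H'$. Thus two of the three hypotheses of Theorem~\ref{thm:ContractJ} are immediate.

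The crux is the remaining hypothesis: that $\lambdaglom{H'}$ is a proper partition for every $\lambda\in\weakfixedspecific{H'}{3}$. Recall that $\lambdaglom{H'}$ is the minimal coarsening of $\lambda$ that merges each $H'$-orbit of blocks into their direct sum; it fails to be proper exactly when $\tau'$ permutes the blocks of $\lambda$ so that a single orbit spans all of $\complexes^3$. I would run through the two types of objects in $\Lcal_3$. A partition $\{\ell,P\}$ into a line and a $2$-plane cannot have its blocks interchanged by $\tau'$, since they have different dimensions; hence it is automatically strongly fixed and $\lambdaglom{H'}=\lambda$ is proper. A three-line partition $\{\ell_1,\ell_2,\ell_3\}$ that is weakly $H'$-fixed has $\tau'$ acting on the three blocks as an involution, which is either the identity (so $\lambdaglom{H'}=\lambda$) or a transposition fixing one line and swapping the other two. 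In the latter case $\lambdaglom{H'}$ is the partition $\{\ell_1,\;\ell_2\oplus\ell_3\}$ into a line and a $2$-plane, which is proper because the three lines are mutually orthogonal. Since an involution can never act as a $3$-cycle, the three blocks are never all merged, and $\lambdaglom{H'}$ is proper in every case.

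With all hypotheses verified, Theorem~\ref{thm:ContractJ} yields the homotopy equivalence $\weakfixedspecific{H'}{3}\simeq\weakfixedspecific{H'}{3}\cap\Isospecific{H'}{3}$ together with the contractibility of $\weakfixedspecific{H'}{3}$. I expect the only delicate point to be the properness verification; everything else is bookkeeping. It is worth highlighting exactly where the argument breaks for the analogous $\integers/2\subseteq U(2)$ of Proposition~\ref{prop: non-contractible}: there the only objects of $\Lcal_2$ are pairs of orthogonal lines, so when $\tau$ swaps them, $\lambdaglom{H}$ collapses to the improper partition $\complexes^2$ and the hypothesis of Theorem~\ref{thm:ContractJ} fails. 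The extra fixed coordinate in $U(3)$ is precisely what lets $\tau'$ interchange two lines while a third line splits off, keeping $\lambdaglom{H'}$ proper and forcing contractibility.
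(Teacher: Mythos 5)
Your proof is correct and follows essentially the same route as the paper: the same two-case properness check (a line and a plane cannot be swapped for dimension reasons; $\integers/2$ cannot act transitively on three lines) and the same eigenspace decomposition showing $H'$ is polytypic. The only cosmetic difference is that you invoke Theorem~\ref{thm:ContractJ} directly with $J=H'$, whereas the paper walks through the steps of that theorem's proof again; since all three hypotheses are literally satisfied, your shortcut is valid.
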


\begin{proof}
  We use essentially the proof of Theorem~\ref{thm:ContractJ},
  slightly modified. 

  First, we assert that if $\lambda$ is weakly fixed by $H'$, then the
  partition $\lambdaglom{H'}$ described in
  Definition~\ref{orbit_partition_def} is a proper partition of
  $\complexes^{3}$. There are two cases: either $\class(\lambda)$ consists
  of three lines, in which case $H'\cong\integers/2$ cannot act on it
  transitively, or else $\lambda$ has one line and one two-dimensional
  subspace, in which case $H'$ cannot interchange them because they
  have different dimensions. As a result, the inclusion
\[
\strongfixedspecific{H'}{3}\hookrightarrow \weakfixedspecific{H'}{3}
\]
has a functorial retraction $\lambda\mapsto\lambdaglom{H'}$, and
induces a homotopy equivalence.

Exactly as in the proof of Theorem~\ref{thm:ContractJ}, we note
that 
\[
\Isospecific{H'}{3}\hookrightarrow \strongfixedspecific{H'}{3}
\]
also has the functorial retraction that takes an object $\lambda$ in 
$\strongfixedspecific{H'}{3}$ to its refinement into $H'$-isotypic
classes. Therefore each of the inclusions 
\[
\Isospecific{H'}{3}\hookrightarrow \strongfixedspecific{H'}{3}
                   \hookrightarrow \weakfixedspecific{H'}{3}
\]
induces a homotopy equivalence. 

To finish the proof we need to identify the homotopy type 
of~$\Isospecific{H'}{3}$. The action of $\tau'$ on
$\complexes^{3}$ has eigenvalues $1$ (with multiplicity $2$) and $-1$.
Therefore $H'$ acts polytypically on $\complexes^{3}$, and 
$\Isospecific{H'}{3}$ has a final object consisting of the 
canonical decomposition of $\complexes^{3}$ into isotypic representations
of $H'$: 
\[
\mu=\{\{(u,u,v)|u,v\in\complexes\},\{(u,-u,0)|u\in\complexes\}\}. 
\]

We conclude that 
$\weakfixedspecific{H'}{3}\simeq \strongfixedspecific{H'}{3}
                          \simeq \Isospecific{H'}{3}
                          \simeq \ast$. 
\end{proof}

 \bibliographystyle{amsalpha}
 	\bibliography{unitary}

\end{document}